\newtheorem{theorem}{Theorem}[section]
\newtheorem{lemma}[theorem]{Lemma}
\newcommand{\ind}[1]{\mathbf{1}\left(#1\right)}
\newcommand{\pr}[1]{\operatorname{\bbP}\left(#1\right)}
\newcommand{\var}{\operatorname{Var}}
\newcommand{\G}{\Gamma}
\renewcommand{\r}{\rho}
\newcommand{\sobolev}{c_\mathrm{S}}
\newcommand{\compl}{\mathrm{c}}
\newcommand{\rel}{\mathrm{rel}}
\newcommand{\tmix}{T_\mathrm{mix}}
\newcommand{\trel}{T_\mathrm{rel}}
\renewcommand{\O}{\Omega}
\renewcommand{\L}{\Lambda}
\newcommand{\mix}{\mathrm{mix}}
\newcommand{\ent}{\mathrm{Ent}}
\newcommand{\ml}{\mathrm{\ell}}
\newcommand{\mr}{\mathrm{r}}
\newcommand{\Lokn}{\Lambda^0_{n,k}}
\newcommand{\Lkn}{\Lambda_{n,k}}
\newcommand{\Okn}{\Omega(n,k)}
\newcommand{\si}{\sigma}
\renewcommand{\t}{\tau}
\renewcommand{\l}{\lambda}
\newcommand{\IGNORE}[1]{}
\renewcommand{\tilde}{\widetilde}
\renewcommand{\hat}{\widehat}
\newcommand{\cC}{\ensuremath{\mathcal C}} 
\newcommand{\cD}{\ensuremath{\mathcal D}} 
\newcommand{\cE}{\ensuremath{\mathcal E}} 
\newcommand{\cF}{\ensuremath{\mathcal F}} 
\newcommand{\cG}{\ensuremath{\mathcal G}}
\newcommand{\bbN}{{\ensuremath{\mathbb N}} } 
\newcommand{\bbP}{{\ensuremath{\mathbb P}} } 
\newcommand{\bbR}{{\ensuremath{\mathbb R}} }
\newcommand{\polylog}{\mathrm{polylog}}
\begin{document}

\title{Dynamics of Lattice Triangulations on Thin Rectangles}
\author{Pietro Caputo,
        Fabio Martinelli\thanks{Department of Mathematics, University of Roma Tre, Largo San Murialdo~1, 00146~Roma, Italy.   {\tt caputo@mat.uniroma3.it}, {\tt martin@mat.uniroma3.it}}, 
        Alistair Sinclair\thanks{Computer Science Division, University of California, Berkeley CA~94720-1776, U.S.A.  {\tt sinclair@cs.berkeley.edu}}, 
        Alexandre Stauffer\thanks{Department of Mathematical Sciences, University of Bath, U.K. {\tt a.stauffer@bath.ac.uk}. Supported in part by a Marie Curie Career Integration Grant PCIG13-GA-2013-618588 DSRELIS.}}

\date{}
\maketitle
\thispagestyle{empty}

\begin{abstract}
We consider random lattice triangulations of $n\times k$ rectangular regions with weight $\l^{|\si|}$ where $\l>0$ is a parameter and 
$|\si|$ denotes the total edge length of the triangulation. 
When $\l\in(0,1)$ and $k$ is fixed, we prove a tight upper bound of order $n^2$ for the mixing time of the edge-flip Glauber dynamics. Combined with the previously known lower bound of order $\exp(\O(n^2))$ for $\l>1$~\cite{CMSS14}, this establishes the existence of a dynamical phase transition for thin rectangles with critical point at $\l=1$.  
\end{abstract}



\section{Introduction}\label{sec:intro}
Consider an $n\times k$ lattice  rectangle $\Lokn = \{0,1,\ldots,n\}\times\{0,1,\ldots,k\}$ in the plane. A triangulation of $\Lokn$ is defined as a maximal set of non-crossing edges
(straight line segments), each of which connects two points of~$\Lokn$ and
passes through no other point. See Figure \ref{fig1} for an example. 

\begin{figure}[h]
\centerline{
\psfig{file=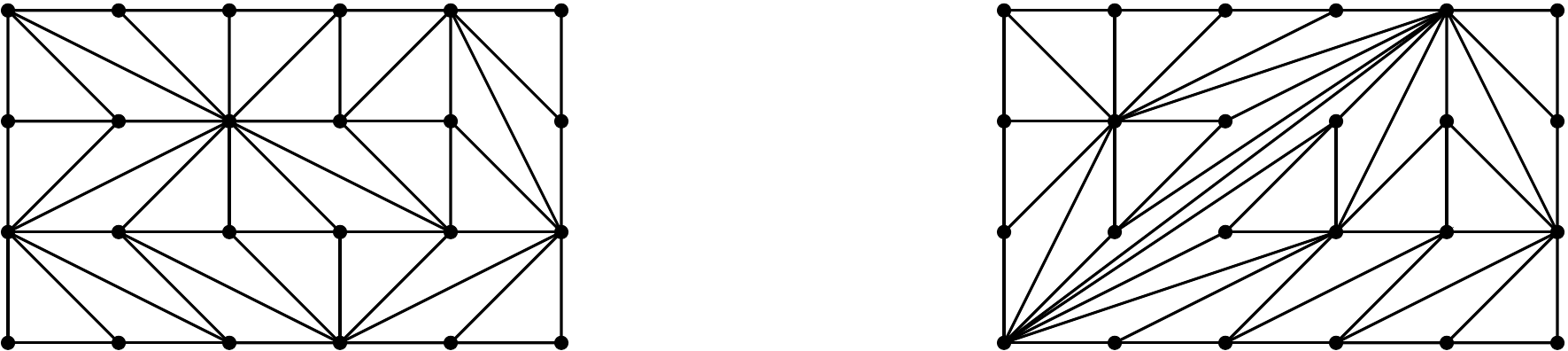,height=1in,width=4.7in}}
\caption{\it Two triangulations of a $5\times 3$ rectangle}
\label{fig1}
\end{figure}

Call $\Okn$ the set of all triangulations of $\Lokn$. All $\si\in\Okn$ have the same number of edges and the set of midpoints of the edges of  $\si$ does not depend on $\si$. Thus, we may view $\si\in\Okn$ as a collection of variables $\{\si_x, \;x\in\Lkn\}$, where 
$$ \Lkn := {\textstyle \{0,\frac{1}{2},1,\frac{3}{2},\ldots,n-\frac{1}{2},n\}
                 \times \{0,\frac{1}{2},1,\frac{3}{2},\ldots,k-\frac{1}{2},k\}}\setminus\Lokn, $$
                 is the set of all midpoints. Moreover,
                 any element $\si\in\Okn$ is {\em unimodular}, i.e., each triangle in $\si$ has area $\tfrac12$; see, e.g., \cite{KZ,tr_book,CMSS14} for these standard structural properties.
If an edge $\si_x$ of $\si$ is the diagonal of a parallelogram, then it is said to be {\em flippable}: one can delete this edge and add the opposite diagonal to obtain a new triangulation $\si'\in\Okn$.  In this case $\si,\si'$ differ by a single {\em diagonal flip} and are said to be {\em adjacent}. The corresponding graph with vertex set $\Okn$, and edges between adjacent triangulations, called the {\em flip graph}, is known to be connected and to have interesting structural properties; see \cite{KZ,CMSS14} and references therein.

We consider the following model of random triangulations. Fix $\l\in(0,\infty)$ and define a probability  measure $\mu $ on $\Okn$ by  
\begin{equation*}
   \mu(\sigma) = \frac{\lambda^{|\sigma|}}{Z},
\end{equation*} 
where $Z=\sum_{\si'\in\Okn} \lambda^{|\sigma'|}$ and $|\sigma|$ is the total $\ell_1$~length of the
edges in~$\sigma$, i.e., the sum of the horizontal and vertical lengths of each edge.  The case $\lambda=1$
is the uniform distribution, while $\lambda<1$ (respectively, $\lambda>1$) favors
triangulations with shorter (respectively, longer) edges. We refer to \cite{CMSS14} and references therein for background and motivation concerning this choice of weights.

A natural way to simulate triangulations distributed according to $\mu$ is to use the edge-flip {\em Glauber dynamics} defined as follows. In state $\sigma$, pick a midpoint $x\in\Lkn$ uniformly at random; if the edge~$\sigma_x$ is flippable to edge $\sigma'_x$ (producing a new
triangulation~$\sigma'$), then flip it with probability \begin{equation}\label{probs}
        \frac{\mu(\sigma')}{\mu(\sigma')+\mu(\sigma)} =
        \frac{\lambda^{|\sigma'_x|}}{\lambda^{|\sigma'_x|} + \lambda^{|\sigma_x|}}, \end{equation}
else do nothing.  Since the flip graph is connected, this defines an irreducible Markov chain on~$\Okn$, and the flip probabilities \eqref{probs} ensure that the chain is reversible with respect to~$\mu$.  Hence the dynamics converges to the stationary
distribution~$\mu$.  We analyze convergence to stationarity
via the standard notion of {\it mixing time}, defined by
 \begin{equation}\label{eq:tmix}
\tmix = \inf\big\{ t\in\bbN \,:\;\max_{\si\in\Okn}\|p^t(\si,\cdot) - \mu\|\leq 1/4
\big\} \,,
\end{equation}
where $p^t(\si,\cdot)$ denotes the distribution after $t$ steps when the initial state is $\si$, and 
$\|\nu-\mu\|=\frac12\sum_{\si\in\Okn}|\nu(\si)-\mu(\si)|$ is the usual total variation distance between
two distributions~$\mu,\nu$. 
%
 

As discussed in \cite{CMSS14}, there is empirical evidence that the value $\l=1$ represents a {\em critical point} separating the {\em sub-critical regime} $\l\in(0,1)$, characterized by rapid decay of both equilibrium and dynamical correlations, from the {\em super-critical regime} $\l>1$, characterized by
the emergence of long-range correlations and a dramatic slowdown in the convergence to equilibrium. 
We substantiated this picture by showing that there exist constants $C>0$ and  $\l_1\in(0,1)$ such that $$\tmix \leq Ckn(k+n),$$ for all $k,n\in\bbN$ and for all $\l\leq \l_1$; see  \cite[Theorem 5.1]{CMSS14}. This estimate is based on a coupling argument that requires $\l$ to be sufficiently small; in particular, 
$\l_1 = 1/8$ suffices. We conjectured in~\cite{CMSS14} that the mixing time should satisfy $\tmix = O(kn(k+n))$ throughout the sub-critical regime $\l\in(0,1)$.
However, except for the special case $k=1$, establishing even an arbitrary polynomial bound on $\tmix$ in the whole region $\l<1$ has turned out to be very challenging.  Regarding the super-critical regime, by 
\cite[Theorem 6.1 and Theorem 6.2]{CMSS14} it is known that, for $\l>1$, one has $\tmix = \exp(\O(k+n))$ for all $k,n$, and that $\tmix = \exp(\O(n^2/k))$ if $n>k^2$ . 

In this paper 
we establish the conjectured behavior for all $\l<1$ in the case of ``thin" rectangles, i.e., the case when $k$ is fixed and $n$ is large. 
\begin{theorem}\label{mainth}
   For any $\l\in(0,1)$, $k\in\bbN$, there exists a constant $C=C(\lambda,k)>0$ such that the mixing time of the Glauber dynamics for $n\times k$ triangulations satisfies $\tmix \leq C\,n^2$ for all $n\geq 1$. 
\end{theorem}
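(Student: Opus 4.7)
The natural approach is to reduce the analysis to a \emph{block dynamics} on vertical slabs of constant width and combine this with a \emph{spatial mixing} property of the equilibrium measure $\mu$. The target mixing time $O(n^2)$ is consistent with proving a spectral gap of order $\Omega(1/n)$ and then invoking $\tmix = O(\gap^{-1}\log(1/\mu_{\min})) = O(n\cdot n) = O(n^2)$, since $\log(1/\mu_{\min})=O(n)$ when $k$ is fixed.

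\emph{Step 1: Strong spatial mixing for $\mu$.} First establish that for $\l<1$ and $k$ fixed, $\mu$ satisfies strong spatial mixing on vertical slabs: there exist constants $C,\gamma>0$ depending only on $\l$ and $k$ such that for any vertical slab $S\subset\Lokn$ and any two boundary configurations $\t,\t'$ on $\Lokn\setminus S$ that agree outside a set at distance at least $d$ from $S$, the marginals on $S$ of $\mu(\cdot\mid\t)$ and $\mu(\cdot\mid\t')$ differ in total variation by at most $C\,e^{-\gamma d}$. Heuristically, a perturbation at distance $d$ can propagate only through chains of flippable edges, each carrying a weight penalty $\l^{\Omega(1)}<1$; summing over all such chains yields the exponential decay.

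\emph{Step 2: Block dynamics.} Cover $\Lokn$ by $M=\Theta(n)$ overlapping slabs $B_1,\dots,B_M$ of width $w$ and overlap $w/2$, where $w=w(\l,k)$ is a sufficiently large constant (exceeding the spatial-mixing scale). The block chain picks a slab uniformly at random and resamples its configuration from $\mu$ conditioned on the exterior. By the Cesi/Martinelli--Olivieri framework, strong spatial mixing from Step~1 implies that this block chain has spectral gap $\Omega(1/n)$.

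\emph{Step 3: Local mixing and comparison.} Each block has bounded state space, so the single-flip Glauber dynamics restricted to a block has spectral gap bounded below by a positive constant, uniformly over boundary conditions. A standard Dirichlet-form comparison (writing a block resampling as an average over trajectories of local flips) transfers the $\Omega(1/n)$ gap from the block chain to the single-flip Glauber dynamics, yielding $\tmix=O(n^2)$.

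\emph{Main obstacle.} The crux is Step~1. Unlike classical short-range spin systems, a lattice triangulation can contain edges spanning a macroscopic portion of $\Lokn$, and the non-crossing and unimodularity constraints are genuinely global. Controlling how a perturbation at one end of the rectangle influences the distribution at the other requires delicate structural tools---likely the ``support'' and ``descent'' objects foreshadowed by the paper's notation---that encode the geometry of influence chains and show that each unit of propagation distance is paid for by a factor $\l^{\Omega(1)}$.
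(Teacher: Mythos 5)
There are genuine gaps here, both structural and quantitative. The most serious is Step~1: the strong spatial mixing you posit (exponential decay at constant distance $d$, uniformly over \emph{all} boundary configurations) is not something you can get from the heuristic you give. The argument that ``each unit of propagation is paid for by a factor $\l^{\Omega(1)}$'' is essentially the naive coupling bound, which is exactly the argument that is known to work only for $\l$ sufficiently small ($\l\leq 1/8$ in \cite{CMSS14}) and which nobody has been able to push to all $\l<1$. Moreover, arbitrary boundary conditions can contain edges of length $\Theta(n)$ protruding deep into a slab, which destroys any uniform constant-scale decoupling. The paper circumvents this by first proving (via the Lyapunov-function tail estimates of \cite{AS}, Lemma~\ref{lem:tailedge}) that after a burn-in of $O(n^2)$ steps the chain lives in a ``good ensemble'' where every edge has length $O(\log n)$, and then proving decay of correlations only inside that ensemble and only across overlap regions of width $\Theta(\log n)$ (Lemma~\ref{lem:decay}), using the existence of top-to-bottom crossings of unit verticals (Lemma~\ref{lem:toptobottom}) as the decoupling mechanism. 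The resulting error is $n^{-\epsilon}$, not $e^{-\gamma d}$ at constant scale, which forces a multi-scale bisection recursion rather than a single constant-width block decomposition. Your Step~3 has the same problem: a constant-width block does \emph{not} have uniformly bounded relaxation time under arbitrary boundary conditions, again because of long constraint edges; even at the base scale $\polylog(n)$ the paper needs a nontrivial canonical-paths-with-burn-in argument (Theorem~\ref{thm:polybound}) restricted to the good ensemble.

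The quantitative accounting is also wrong. For the unrestricted chain, a triangulation can have total edge length $\Theta(n^2)$ (for fixed $k$ there are $\Theta(n)$ edges each of length up to $n$), so $\log(1/\mu_{\min})=\Theta(n^2)$, not $O(n)$; a spectral gap of $\Omega(1/n)$ would then only give $\tmix=O(n^3)$. Even after restricting to edges of length $O(\log n)$ one gets $\log(1/\tilde\mu_*)=\Theta(n\log n)$ and hence $O(n^2\log n)$ --- still short of the claimed bound. This is precisely why the paper works with the logarithmic Sobolev constant rather than the spectral gap: the bound $\tmix\leq \tfrac{\sobolev}{4}(4+\log_+\log\mu_*^{-1})$ replaces the factor $\log(1/\mu_*)$ by $\log\log(1/\mu_*)$, and the $O(n^2)$ in Theorem~\ref{mainth} ultimately comes from the burn-in time needed to shed long edges from a worst-case initial condition, not from $\gap^{-1}\log(1/\mu_{\min})$. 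So while your three-step outline is in the right spirit (decoupling plus a divide-and-conquer functional inequality), as written each step either fails or falls short of the target exponent.
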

We remark that the above bound is sharp up to the value of the constant $C$ since it is known that $\tmix\geq C_0 kn(k+n)$ for some positive constant $C_0$ for any $k,n\in\bbN$ and any $\l>0$; see \cite[Proposition 6.3]{CMSS14}. However, as a function of $k$ the constant $C$ in Theorem \ref{mainth} can be exponentially large, and thus the interest of this bound is limited to the case of thin rectangles. 

In the special case $k=1$, the above theorem can be obtained by a direct coupling argument; see \cite[Theorem 5.3]{CMSS14}. Moreover, it is interesting  to observe that in the case $k=1$ the set of triangulations is in 1-1~correspondence with the set of configurations of a lattice path, and that diagonal flips are equivalent to so-called mountain/valley flips in the lattice path representation. Weighted versions of lattice path models have been studied extensively in the past (see, e.g., \cite{CMT,GPR}), and it is tempting to analyze the 
$n\times k$ triangulation model as a multi-path system with $k$ interacting lattice paths. While this can be done in principle, it turns out that the interaction between the paths is technically very complex. Even the case $k=2$ apparently does not allow for significant simplification with  this representation. 

The proof of Theorem~\ref{mainth} will rely crucially on some recent developments by one of us~\cite{AS} based on a Lyapunov function approach to the sub-critical regime $\l\in(0,1)$. As detailed in subsequent sections,  the main results of \cite{AS} will be used first  to show that after $T=O(n^2)$ steps of the chain we can reduce the problem to a restricted chain on a ``good'' set of triangulations, each edge of which never exceeds logarithmic length, and then to show that distant regions in our thin rectangles can be decoupled with an exponentially small error. This will enable us to set up a recursive scheme for functional inequalities related to mixing time such as the {\em logarithmic Sobolev inequality}. 
The recursion,  based on a bisection approach for the relative entropy functional inspired by the spin system analysis of \cite{Martinelli:1999jm,Cesi}, allows us to reduce the scale from $n\times k$ down to $\polylog(n)\times k$.
Once we reach the $\polylog(n)\times k$ scale, we use a refinement from~\cite{CLMST} of the classical {\em canonical paths argument}~\cite{AlSi}. This allows one to obtain an upper bound on the relaxation time of a Markov chain in terms of the congestion ratio restricted to a subspace $\O'$ and the time the chain needs to visit~$\O'$ with large probability. 
Here we use a further crucial input from \cite{AS} permitting us to identify a ``canonical" subset of triangulations $\O'$ such that after $T=O(n^2)$ the chain enters $\O'$ with large probability and such that the chain restricted to $\O'$ has small congestion ratio. A detailed high-level overview of the proof will be given in Section~\ref{sec:poverview}.

The rest of the paper is organized as follows. In Section \ref{maintools}, we first recall some important tools from \cite{CMSS14} and then formulate the main ingredients we need from~\cite{AS}. Then, in Section \ref{sec:canonical} we develop the applications of improved canonical path techniques to our setting.  In Section \ref{sec:tightmixing} we discuss the recursive scheme for the log-Sobolev inequality and prove Theorem \ref{mainth}. 
 
\section{Main tools}\label{maintools}

\subsection{Triangulations with boundary conditions} 
We will often consider subsets of $\O(n,k)$ consisting of triangulations in which some edges
are kept fixed, or ``frozen"; we call these {\it constraint edges}.
Formally, let $\L'\subset\Lkn$ denote a subset of the midpoints, and fix a 
collection of non-crossing edges $\{\t_y,\;y\in\L'\}$, i.e., straight lines with midpoints in $\L'$ each of which connects two points of~$\Lokn$ and
passes through no other point of~$\Lokn$. If $\si\in\Okn$ satisfies $\{\si_y=\t_y\,,\;y\in\L'\}$, we say that $\si$ is {\it compatible\/} with the constraint edges $\t$. We interpret the constraint edges $\t$ as a {\em boundary condition}.

We shall actually need a more general notion of boundary condition, in order to deal with the possibility of constraint edges whose midpoints lie outside the rectangle $\Lokn$. 
Let $N$ be an integer and consider the set $Q^0_{N,n,k}=\{-N,\dots,n+N\}\times \{0,\dots,k\}$, i.e., a $(2N+n)\times k$ rectangle containing $\Lokn$, and let $Q_{N,n,k}$ denote the set of midpoints of a triangulation of $Q^0_{N,n,k}$. 
Fix a triangulation $\hat\t$ of the region $Q^0_{N,n,k}$ and call $\t$ the set of edges obtained from $\hat\t$ by deleting some or all edges $\hat\t_x$ with midpoint $x\in \Lkn$. Thus, $\t$ is a set of constraint edges for triangulations of 
$Q^0_{N,n,k}$ such that all edges with midpoints in $Q_{N,n,k}\setminus \Lkn$ are assigned. 
Given constraint edges 
$\t$ as above, we define $\O^\t(n,k)$ as the set of all triangulations $\si$ of $Q^0_{N,n,k}$ that are
compatible with $\t$. Since the parameter $N$ will play no essential role in what follows we often omit it from our notation. Since all elements of $\O^\t(n,k)$ have the same edges at midpoints in $ Q_{N,n,k}\setminus \Lkn$, one can also view a triangulation $\si\in\O^\t(n,k)$ as an assignment of edges to midpoints in $\Lkn$ with certain constraints. 
Note that while the midpoint of a non-constraint edge of a triangulation $\si\in\O^\t(n,k)$ is always contained in $\Lkn$, its endpoints need not be contained in $\Lokn$; we refer to Lemma \ref{lengthle} below for a quantitative statement on the smallest rectangle containing all non-constraint edges of any $ \si\in\O^\t(n,k)$ in terms of the length of the largest edge in $\t$. 

The random triangulation $\si$ with boundary condition $\t$ is the random variable $\si\in\O^\t(n,k)$ with distribution
\begin{equation}\label{eq:gibbs}
   \mu^\t(\sigma) = \frac{\lambda^{|\sigma|}}{Z},
\end{equation} 
where $Z=\sum_{\si'\in\O^\t(n,k)}\lambda^{|\sigma'|}$. 
We sometimes write $\mu$ instead of $\mu^\t$ and $\O$ instead of $\O^\t(n,k)$ if there is no need to stress the dependence on the constraint edges. 
We say that there is {\it no boundary condition\/} when $N=0$ and the set of constraint edges $\t$ is empty. In this case 
$\O^\t(n,k)$ coincides with $\Okn$, the set of all triangulations of $\Lokn$. 

\subsection{Ground states}
It is a fact that for any set of constraint edges $\t$, the set of triangulations $\O^\t(n,k)$ that are compatible with $\t$ is non-empty. Among the compatible triangulations, we are particularly interested in those with minimal $\ell_1$-edge length, which we call {\em ground state triangulations}. These are the triangulations of maximum weight in \eqref{eq:gibbs} when $\lambda<1$, and they play a central role in our analysis.
In the absence of boundary conditions, the ground state triangulations are
trivial: every edge is either horizontal or vertical or a unit diagonal, so in particular
the ground state is unique up to flipping of the unit diagonals.
The presence of constraint edges can change the ground
state considerably.  However, the following result from \cite[Lemma~3.4]{CMSS14} reveals the 
strikingly simple structure of ground states for any set of contraints.
\begin{lemma}\label{lemma:gsl}{\em [Ground State Lemma]}
Given any set of constraint edges, the ground state triangulation is unique
(up to possible flipping of unit diagonals), and can be constructed by
placing each edge in its minimal length configuration consistent with the
constraints, independent of the other edges.
\end{lemma}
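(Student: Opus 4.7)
My plan is to establish a matching lower bound on $|\si|$ from purely local data, construct a triangulation achieving it, and then read off uniqueness. For each midpoint $x \in \Lkn$, let $\ell(x)$ denote the minimum $\ell_1$-length over lattice edges with midpoint $x$ whose interior is disjoint from every edge of $\t$. Since any $\si \in \O^\t(n,k)$ places at $x$ such a non-crossing edge $\si_x$, one has $|\si_x| \geq \ell(x)$ and hence $|\si| \geq \sum_{x \in \Lkn} \ell(x)$. The lemma therefore reduces to producing a triangulation $\si^* \in \O^\t(n,k)$ with $|\si^*_x| = \ell(x)$ for every $x$, and showing that all such minimisers agree up to flips of unit diagonals.

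To construct $\si^*$, I would choose at each midpoint $x$ a lattice edge of length $\ell(x)$ (with a tie-breaking rule) and argue that these local choices can be made mutually non-crossing. The central geometric claim is: if $e^*_x, e^*_y$ are edges of minimum length at distinct midpoints $x,y$, then with appropriate tie-breaking they can be chosen not to cross. I would prove this by contradiction. Suppose $e^*_x$ and $e^*_y$ cross at an interior point $P$; label endpoints $A,C$ of $e^*_x$ and $B,D$ of $e^*_y$ so that $ABCD$ is a convex quadrilateral with $AC,BD$ as diagonals. The points $A+C-B$, $A+C-D$, $B+D-A$, $B+D-C$ are lattice points, yielding four alternative candidate edges at the midpoints $x$ and $y$ (with endpoints $\{B, A+C-B\}$, $\{D, A+C-D\}$, $\{A, B+D-A\}$, $\{C, B+D-C\}$ respectively). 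Using $\ell_1$-additivity along line segments ($|AC|_{\ell_1} = |AP|_{\ell_1}+|PC|_{\ell_1}$, and the analogous identity for $BD$) together with the triangle inequality applied to the four subtriangles sharing apex $P$, a case analysis should show that at least one of these candidates is strictly shorter than the corresponding original --- or else equal in length but non-crossing with the other minimum edge, allowing a consistent re-choice. This contradicts the joint minimality.

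For uniqueness, note that an edge with midpoint $x=(a,b)$ and endpoints $(a\pm u, b\pm v)$ has $\ell_1$-length $2(|u|+|v|)$; enumerating lattice displacement vectors $(u,v)$ by $|u|+|v|$ shows that this minimum is attained by a unique edge except when $x = (i+\tfrac12, j+\tfrac12)$ for integers $i,j$, in which case the two unit diagonals of the unit square with corners $(i,j),(i+1,j),(i,j+1),(i+1,j+1)$ both achieve $\ell(x)=2$. Therefore any two minimisers of $|\cdot|$ in $\O^\t(n,k)$ differ only by flipping some subset of unit diagonals.

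The main obstacle is the geometric consistency step: ruling out crossings between minimum-length edges at distinct midpoints, which requires careful bookkeeping in the convex quadrilateral $ABCD$ and the reflective construction of swap candidates. Once that is in hand, both existence of the ground state described in the lemma and its characterisation up to unit-diagonal flips follow by routine arguments.
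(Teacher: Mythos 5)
First, a point of reference: this paper does not prove the Ground State Lemma at all --- it is imported verbatim from \cite{CMSS14} (Lemma~3.4 there), whose argument runs through the flip structure: one starts from an abstract minimizer of $|\cdot|$ on the finite set $\O^\t(n,k)$, shows that any triangulation containing an edge not in its minimal configuration admits a length-decreasing flip, and deduces uniqueness from the fact that the set of admissible edges at a midpoint $x$ forms a tree $\cG^\t_x$ on which the length function is unimodal. Your route --- lower-bound $|\si|$ by the sum of local minima $\ell(x)$ and then build a triangulation achieving it --- is a legitimate alternative in outline, but your sketch leaves gaps precisely at the steps that carry the difficulty.

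Concretely: (i) In the crossing argument, the swap candidates such as $\{B,\,A+C-B\}$ are not shown to be admissible competitors for $\ell(x)$. They need not be primitive lattice edges (the displacement $A+C-2B$ can have a nontrivial odd gcd, so the segment passes through interior lattice points), and, more importantly, nothing guarantees they avoid the constraint edges $\t$; a candidate that crosses $\t$ gives no contradiction with the minimality of $e^*_x$. (ii) If one carries out the length bookkeeping with $a=|AP|_1$, etc., the minimality of both $e^*_x$ and $e^*_y$ forces only $\min(a,c)=\min(b,d)$, and all the resulting comparisons close up as \emph{equalities}; the $\ell_1$ triangle inequality is tight on large families of configurations, so the ``tie, re-choose consistently'' branch is the generic case, not an edge case, and a global, simultaneous tie-breaking over all midpoints is exactly what needs proof. (iii) Your uniqueness argument enumerates displacement vectors and is valid only with no constraints. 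The lemma is about the constrained case, where the minimum of $|\cdot|$ over the edges at $x$ avoiding $\t$ could a priori be attained by several mutually non-adjacent edges (e.g.\ the two length-$3$ edges at a midpoint whose unit horizontal is blocked); ruling this out requires the connectivity of the admissible set in the tree $\cG^\t_x$ together with unimodality of the length along it --- structure your proposal never invokes. As it stands the proposal is a plan whose hard steps are deferred rather than a proof; to complete it you would essentially have to re-derive the structural results of \cite{CMSS14} (the tree $\cG^\t_x$ and the existence of monotone flip paths), at which point the original argument is shorter.
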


Given a set of constraint edges, we denote by $\bar \si$ the unique ground state triangulation. 
(An arbitrary choice of the available unit diagonal orientations is understood in this notation.) 
If no confusion arises, we omit to specify the dependence on the constraint edges.  
An important structural property of triangulations with constraint edges, which follows
from Lemma~\ref{lemma:gsl}, is that from any triangulation $\si$ compatible with $\t$ one can reach the ground state $\bar\si$ with a path in the flip graph with the property that no flip increases the length
of an edge.

\subsection{The Glauber dynamics}
The Glauber dynamics in the presence of a boundary condition $\t$  is defined as before (see equation~\eqref{probs}), with the modification that the midpoint $x$ to be updated is picked uniformly at random among all midpoints of non-constraint edges. For any $\l>0$, this defines an irreducible Markov chain on $\O^\t(n,k)$ that is reversible w.r.t.\ the stationary distribution $\mu^\t$ (see~\cite{CMSS14} for  details). It was shown in \cite[Theorem 5.1]{CMSS14} that for some constants $C>0$ and $\l_1\in(0,1)$, the mixing time of this chain in an $n\times k$ rectangle satisfies $\tmix \leq Ckn(k+n)$ uniformly in the choice of the constraint edges, whenever $\l\leq \l_1$. 
We also conjectured in~\cite{CMSS14} that the $O(kn(k+n))$ mixing time should hold for all $\l\in(0,1)$. 

\subsection{Key ingredients from \cite{AS}}\label{keyas}
%
%
We gather in Lemmas \ref{lem:tailedge}--\ref{lem:toptobottom} below some estimates 
from~\cite{AS} that will be crucial in our analysis; for the proofs see~\cite{AS}.  Note that these
estimates are valid throughout the sub-critical regime $\l\in(0,1)$.

The first lemma applies to the case where there are no constraint edges, so that the ground state is trivial.  
It follows from~\cite[Corollary~7.4]{AS}, and establishes that after running the Markov chain for $O(n^2)$ steps, 
the $\ell_1$-length of a given edge has an exponential tail.  
For a given initial triangulation $\si=\si^0$, we denote by $\si^t$ the triangulation after~$t$ steps of 
the chain. 

\begin{lemma}\label{lem:tailedge}
   Fix $\l\in(0,1)$.  There exist positive
   constants $c_1 = c_1(\lambda)$ and $c_2 = c_2(\lambda)$ such that for $n\geq k\geq 1$, for any $t \geq c_1 n^2$, any $\ell> 0$, any midpoint $x\in\Lkn$, and any initial triangulation $\si\in\O(n,k)$:
   $$
      \pr{|\sigma_x^t| \geq \ell}
      \leq c_1\exp{(-c_2 \ell)}.
   $$
\end{lemma}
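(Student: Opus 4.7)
The plan is a Foster--Lyapunov argument exploiting the sub-critical bias $\l<1$ toward short edges. Because an exponential-tail bound on $|\si_x^t|$ is equivalent to a uniform-in-$t$ bound of the form $\E{e^{\a|\si^t_x|}}=O(1)$ for some $\a>0$, the natural Lyapunov function is
\[
\Phi_\a(\si)\;=\;\sum_{y\in\Lkn} e^{\a|\si_y|},\qquad 0<\a<\log(1/\l),
\]
and I would aim for a one-step contraction $\Econd{\Phi_\a(\si^{t+1})}{\si^t}\le\bigl(1-\tfrac{\delta}{|\Lkn|}\bigr)\Phi_\a(\si^t)+K$ for positive constants $\delta=\delta(\l,\a)$ and $K=K(\l,\a)$.

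For the per-step drift, only one edge changes per step and the updated midpoint is uniform on $\Lkn$, so the drift decomposes as a sum of per-midpoint contributions. For a flippable $y$ with alternative diagonal $\si'_y$, the contribution is $\frac{1}{|\Lkn|}p_y\bigl(e^{\a|\si'_y|}-e^{\a|\si_y|}\bigr)$, where $p_y$ is the acceptance probability in~\eqref{probs}. When $|\si'_y|\le|\si_y|$, one has $p_y\ge\tfrac12$ and the bracket is at most $-(1-e^{-\a})e^{\a|\si_y|}$, a strictly negative contribution; when $|\si'_y|>|\si_y|$, $p_y\le\l^{|\si'_y|-|\si_y|}$, and since $\a<\log(1/\l)$ the product $p_y\,e^{\a|\si'_y|}$ is bounded by a constant times $e^{\a|\si_y|}$, so the positive excursion is harmless. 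Iterating the resulting drift inequality yields $\E{\Phi_\a(\si^t)}\le e^{-\delta t/|\Lkn|}\,\Phi_\a(\si^0)+O(|\Lkn|)$; combining with $\Phi_\a(\si^0)\le|\Lkn|\,e^{\a(n+k)}$ and $t\ge c_1n^2$ for $c_1=c_1(\l)$ sufficiently large produces $\E{e^{\a|\si^t_x|}}=O(1)$, and Markov's inequality converts this into the claimed exponential tail.

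The main obstacle I anticipate is uniformity in $k$. Since $|\Lkn|=\Theta(nk)$, the naive global Lyapunov function above has contraction rate $\delta/(nk)$, whereas the worst-case initial value can be as large as $n^2\,e^{\a(n+k)}$ when $k$ is comparable to $n$; a crude iteration then requires $t=\Omega(n^2k)$ rather than the $\Omega(n^2)$ claimed by the lemma. Closing this gap demands a more refined, localized Lyapunov function -- for instance one that tracks only the edges in a neighborhood of $x$ of side $\sim\log n$, supplemented by an auxiliary monotone-coupling or percolation-style argument showing that in the sub-critical regime edges outside such a neighborhood cannot quickly influence $|\si^t_x|$. This geometric--combinatorial refinement is the technical heart of \cite{AS}, from which the present lemma is extracted.
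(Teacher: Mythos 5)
First, for calibration: the paper does not prove this lemma at all --- it is imported verbatim from \cite{AS} (Corollary~7.4), and the argument there is indeed a Foster--Lyapunov analysis of the kind you outline. So your high-level strategy is the right one; the problem is that, as written, the proposal asserts rather than proves the one step that constitutes the entire technical content.

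Concretely, the contraction $\Econd{\Phi_\a(\si^{t+1})}{\si^t}\le\bigl(1-\tfrac{\delta}{|\Lkn|}\bigr)\Phi_\a(\si^t)+K$ does not follow from your per-midpoint observations. Negative drift is contributed only by midpoints $y$ whose edge is currently flippable \emph{and} whose alternative diagonal is shorter, and nothing in your argument shows that such midpoints carry a definite fraction of the weight $\Phi_\a(\si)$: a long edge need not be flippable at all (it can be locked by its neighbours), and the Ground State Lemma guarantees only that \emph{some} decreasing flip exists somewhere, not one per long edge. Moreover the increasing flips are not ``harmless'': your own bound gives $p_y\,e^{\a|\si'_y|}\le(\l e^{\a})^{|\si'_y|-|\si_y|}e^{\a|\si_y|}\le \l e^{\a}\,e^{\a|\si_y|}$ per such midpoint, so their total contribution to the drift can be of order $\tfrac{1}{|\Lkn|}\Phi_\a(\si)$ --- the same order as the negative term you need. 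The required inequality is therefore a genuine combinatorial statement about which edges of a lattice triangulation admit decreasing flips and how much weight they carry; this is the heart of \cite{AS} and cannot be waved through. Two further losses: (i) the global function only yields $\E{\Phi_\a(\si^t)}=O(|\Lkn|)$, hence $\E{e^{\a|\si^t_x|}}=O(nk)$ for a \emph{fixed} $x$, and Markov then gives $c\,nk\,e^{-\a\ell}$, whose polynomial prefactor cannot be absorbed into constants depending only on $\l$; and (ii) as you yourself note, iterating a drift with rate $\delta/|\Lkn|$ from an initial value $e^{\Theta(n)}$ needs $t\gtrsim n^2k$, not $t\ge c_1n^2$. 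You flag (ii) honestly and defer to \cite{AS}; (i) and, above all, the missing flippability/weight lemma are gaps of the same nature and should be acknowledged as such --- the proposal is a correct description of the proof architecture, not a proof.
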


The next lemma deals with the evolution in the presence of constraint edges $\t$, and follows from~\cite[Theorem~7.3]{AS}. 
We denote by $\bar\si_x$ the ground state edge at $x$ (compatible with $\t$). 
Given $\si\in\O^\t(n,k)$ and $y\in\Lkn$, we write $\si_y\cap \bar\si_x\neq \emptyset$ if the edge $\si_y$ crosses $\bar\si_x$ 
(not including the case where $\si_y$ and $\bar\si_x$ intersect only at their endpoints). 

\begin{lemma}\label{lem:crosstail}
   Fix $\l\in(0,1)$.  There exist positive
   constants $c_1 = c_1(\lambda)$ and $c_2 = c_2(\lambda)$ such that the following holds for $n\geq k\geq 1$, for any set of constraint edges $\t$.  Let $M$ be the $\ell_1$ length  of the largest edge in any triangulation $\si\in\O^\t(n,k)$. Then, 
for any $t\geq c_1kn(M+\log n)$, and any $\ell\geq 0$, we have 
  \begin{equation}\label{grcr}
      \pr{\bigcup\nolimits_{y\in \Lkn}\big\{\sigma^t_y \cap \bar\sigma_x\neq \emptyset\big\} \cap \big\{|\sigma^t_y| \geq |\bar\sigma_x|+\ell\big\}}
      \leq c_1\exp\left(-c_2\ell\right).
   \end{equation}
\end{lemma}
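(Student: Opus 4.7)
The plan is to adapt the Lyapunov-function approach of \cite{AS} to the setting with arbitrary constraint edges $\tau$. The underlying mechanism is that for $\lambda\in(0,1)$ the Metropolis rates \eqref{probs} favor length-decreasing flips, producing a systematic drift of the chain toward the ground state $\bar\sigma$ supplied by Lemma \ref{lemma:gsl}. I would introduce, for a small constant $\gamma=\gamma(\lambda)>0$, the exponential Lyapunov function
\begin{equation*}
   F(\sigma) \;=\; \sum_{y\in\Lambda_{n,k}} W_y(\sigma), \qquad W_y(\sigma) := \exp\!\bigl(\gamma\,[\,|\sigma_y| - |\bar\sigma_y|\,]_+\bigr),
\end{equation*}
where $[\cdot]_+$ denotes the positive part. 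The target is the one-step contraction
\begin{equation*}
   \E{F(\sigma^{t+1})\mid\sigma^t} \;\leq\; \bigl(1 - \tfrac{\alpha}{kn}\bigr)F(\sigma^t) + \tfrac{\beta}{kn},
\end{equation*}
for constants $\alpha=\alpha(\lambda),\,\beta=\beta(\lambda)>0$.

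The key input for the contraction is the monotone-path consequence of Lemma \ref{lemma:gsl}: whenever $\sigma\neq\bar\sigma$ there is a flippable edge of $\sigma$ whose flip strictly shortens some $\sigma_y$ with $|\sigma_y|>|\bar\sigma_y|$, and by \eqref{probs} any shortening flip is accepted with probability at least $1/2$ because $\lambda<1$. Aggregated over $y$, this provides the negative drift term. Length-increasing flips are accepted with probability at most $\lambda^d$ where $d\geq 1$ is the length increase, so for $\gamma$ small enough their multiplicative effect on the corresponding $W_y$ is bounded and absorbable into the constant $\beta$. Because a flip at midpoint $y'$ only alters the single term $W_{y'}$, the drift analysis reduces to a local estimate around each midpoint; cross-interactions enter only through the geometry of which edges are flippable and are handled by the locality of the dynamics.

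Iterating the contraction from $F(\sigma^0)\leq kn\,e^{\gamma M}$ gives
\begin{equation*}
   \E{F(\sigma^t)} \;\leq\; kn\,e^{\gamma M}\,e^{-\alpha t/(kn)} + O(kn),
\end{equation*}
and for $t\geq c_1kn(M+\log n)$ with $c_1$ sufficiently large the first term is $O(1)$, whence $\E{F(\sigma^t)}=O(kn)$. Markov's inequality combined with a union bound over the $O(kn)$ midpoints yields
\begin{equation*}
   \pr{\exists y\in\Lambda_{n,k}\colon [\,|\sigma_y^t|-|\bar\sigma_y|\,]_+\geq \ell'} \;\leq\; O\bigl((kn)^2\bigr)\,e^{-\gamma\ell'}.
\end{equation*}
The crossing hypothesis $\sigma_y^t\cap\bar\sigma_x\neq\emptyset$ forces a geometric comparison $|\bar\sigma_y|\leq C_\lambda|\bar\sigma_x|$, since $\bar\sigma_y$ is the shortest edge at $y$ compatible with $\tau$ and a crossing of $\bar\sigma_x$ requires $\sigma_y^t$ to span the corresponding region; hence $|\sigma_y^t|\geq|\bar\sigma_x|+\ell$ implies $[\,|\sigma_y^t|-|\bar\sigma_y|\,]_+\gtrsim\ell$. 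Absorbing the $(kn)^2$ prefactor into the exponent by slightly shrinking $\gamma$ (at the cost of enlarging $c_1$) delivers \eqref{grcr}. The main obstacle is the one-step contraction itself: establishing it with $\alpha,\beta$ uniform in $k,n$ and in $\tau$ requires a delicate local geometric analysis showing that the shortening drift dominates the lengthening excursions in the exponentially weighted sum $F$, and this analysis is the technical core of \cite{AS}.
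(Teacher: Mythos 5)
The paper does not prove this lemma at all: it is imported verbatim from \cite[Theorem~7.3]{AS}, and Section~\ref{keyas} explicitly defers all proofs of Lemmas~\ref{lem:tailedge}--\ref{lem:toptobottom} to that reference. So the honest comparison is between your sketch and the argument in \cite{AS}, whose general philosophy (an exponentially weighted Lyapunov function driven toward the ground state of Lemma~\ref{lemma:gsl} by the length-decreasing bias of the rates \eqref{probs}) you have correctly identified. But as a proof your proposal has two genuine gaps. First, the one-step contraction $\E{F(\sigma^{t+1})\mid\sigma^t}\leq(1-\alpha/kn)F(\sigma^t)+\beta/kn$ is asserted, not established, and you concede as much in your last sentence. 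The existence of \emph{one} shortening flip from any $\sigma\neq\bar\sigma$ (the monotone-path property) is far from enough: to contract the exponential sum you need the available decreasing flips, weighted by the increments of $W_y$, to dominate a constant multiple of the entire excess $F(\sigma)-O(kn)$, uniformly in $\tau$, while edges that are above ground state but not currently flippable downward contribute to $F$ without contributing to the drift. This is precisely the content of the cited theorem, so the proposal is circular at its core.

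Second, even granting the contraction, your bridge from the Lyapunov-controlled event to the event in \eqref{grcr} does not work as stated. Your function controls the pointwise excess $[\,|\sigma_y^t|-|\bar\sigma_y|\,]_+$, whereas the lemma concerns the excess of $|\sigma_y^t|$ over $|\bar\sigma_x|$, the length of the \emph{crossed} ground-state edge. The claimed inequality $|\bar\sigma_y|\leq C_\lambda|\bar\sigma_x|$ is not justified (it is a statement about ground states, and the crossing hypothesis constrains $\sigma_y^t$, not $\bar\sigma_y$; note also that $\lambda$ cannot enter a purely geometric comparison), and even if true it is insufficient: with $|\bar\sigma_x|$ large and $\ell$ small, $|\sigma_y^t|\geq|\bar\sigma_x|+\ell$ together with $|\bar\sigma_y|\leq C_\lambda|\bar\sigma_x|$ gives only $|\sigma_y^t|-|\bar\sigma_y|\geq\ell-(C_\lambda-1)|\bar\sigma_x|$, which can be negative, so Markov's inequality on $W_y$ yields nothing. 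The formulation of \cite[Theorem~7.3]{AS} directly in terms of crossings of ground-state edges is a strong hint that the correct Lyapunov functional must be built from the crossing excesses $|\sigma_y|-|\bar\sigma_x|$ themselves (compare Lemma~\ref{lem:setI}(i), which makes these nonnegative) rather than from the pointwise excesses over $\bar\sigma_y$; with your choice of $F$ the final reduction fails.
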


Next we give a rough upper bound on the number of small edges intersecting a given ground state  edge. 
We assume that a set of constraint edges $\t$ is given. For any triangulation $\sigma\in\O^\t(n,k)$,
any ground state edge $g$, and any $\ell\in\mathbb{Z}^+$, define
$$
   I_g(\sigma,\ell)=\left\{\sigma_x\,,\;x\in\Lkn\colon \;\sigma_x \cap g \neq \emptyset \text{ and } |\sigma_x|\leq |g|+\ell\right\}.
$$
We denote by $|I_g(\sigma,\ell)|$ the cardinality of $I_g(\sigma,\ell)$. For a proof of the lemma below, see~\cite[Proposition~4.4]{AS}.
\begin{lemma}\label{lem:setI}
   Let $g$ be a ground state edge, and let $\sigma\in\O^\t(n,k)$ be a triangulation. 
   \begin{enumerate}[i)]
      \item If $\sigma_x\cap g\neq \emptyset$ then 
         $|\sigma_x|\geq |g|$, with strict inequality when the midpoint of $g$ is not $x$.
      \item For any $\ell\geq 1$, all midpoints of edges in $I_g(\sigma,\ell)$ are contained in the ball of radius $2\ell$ centered at the midpoint of $g$.
      \item There exists a universal $c>0$ such that for any $\ell\geq 1$ we have 
         $$|I_g(\sigma,\ell)| \leq c\,\ell^2\,,\;\;\text{ and} \;\quad\big|\bigcup\nolimits_\sigma I_g(\sigma,\ell)\big| \leq c\,\ell^4.$$
   \end{enumerate}
\end{lemma}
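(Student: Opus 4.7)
I would prove the three statements in order, since the later parts lean on the earlier ones.

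For part (i), the main tool is the Ground State Lemma (Lemma~\ref{lemma:gsl}), which says that $g$ is the minimum-$\ell_1$-length edge at its midpoint $y$ compatible with the constraints $\tau$, unique up to unit-diagonal flips. If $x=y$, then $\sigma_x$ is simply an alternative edge at the same midpoint, so $|\sigma_x|\ge |g|$ is immediate. For $x\ne y$, the crossing $\sigma_x\cap g\ne\emptyset$ forces the four endpoints of $\sigma_x$ and $g$ to form a quadrilateral in which $\sigma_x$ and $g$ are the diagonals. The plan is to manufacture from this configuration a candidate edge at midpoint $y$, of $\ell_1$-length at most $|\sigma_x|$, whose direction differs from that of $g$; the uniqueness clause of the Ground State Lemma then rules out equality, forcing $|\sigma_x|>|g|$.

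For part (ii), take $\sigma_x\in I_g(\sigma,\ell)$, write $g=AB$ and $\sigma_x=CD$, and let $P$ denote their unique interior crossing point. I would exploit the additivity of $\ell_1$-length along collinear points to decompose $|g|=|AP|+|PB|$ and $|\sigma_x|=|CP|+|PD|$, then combine this with the $\ell_1$-triangle inequality applied to the ``reconnected'' lattice segments $AC,AD,BC,BD$. Feeding the midpoints of these auxiliary segments back into the ground-state minimality of $g$ at $y$, one should obtain the quantitative strengthening of part (i)
\[
|\sigma_x|-|g| \;\ge\; 2\,\|x-y\|_\infty,
\]
so that $|\sigma_x|\le |g|+\ell$ forces $\|x-y\|_\infty\le \ell/2$, which is strictly stronger than the claimed containment in the ball of radius $2\ell$.

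For part (iii), both bounds follow from (ii) by a volume/counting argument. The midpoints of edges in $I_g(\sigma,\ell)$ are confined to a ball of radius $2\ell$ around $y$, which contains $O(\ell^2)$ points of $\Lkn$, giving $|I_g(\sigma,\ell)|\le c\,\ell^2$. For the union across all $\sigma$, at each fixed midpoint $x$ in this ball, one parametrizes the admissible edges by their lattice half-vector $v$, where $2|v|_1\in[|g|,|g|+\ell]$ together with the requirement of crossing $g$ properly; combining this with part (i) (applied to the candidate edge $x\pm v$) restricts the number of admissible $v$ to $O(\ell^2)$ per midpoint. Summing over the $O(\ell^2)$ midpoints yields the claimed $O(\ell^4)$ bound.

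The principal obstacle is part (ii). A naive triangle-inequality estimate on $\|x-y\|$ routed through the crossing point $P$ only yields a bound of order $|g|$, which is useless; the tight linear-in-$\ell$ control genuinely requires the ground-state minimality of $g$ at $y$ combined with the integrality of lattice endpoints, and is the technical heart of the lemma. Once (ii) is established, part (i) is elementary via the Ground State Lemma and part (iii) reduces to a clean counting argument.
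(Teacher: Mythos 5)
First, a point of reference: the paper does not prove this lemma at all --- it is imported verbatim from~\cite[Proposition~4.4]{AS} --- so there is no in-paper argument to compare yours against; I am judging the proposal on its own terms.

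Your overall architecture is right (establish the localization in (ii), then count), and the easy pieces are fine: part~(i) when $x$ is the midpoint of $g$ is immediate from Lemma~\ref{lemma:gsl}, and the first bound in (iii) follows from (ii) because a triangulation has exactly one edge per midpoint and a ball of radius $O(\ell)$ contains $O(\ell^2)$ midpoints. But the two substantive claims --- part~(i) for $x\neq y$ and the quantitative localization in (ii) --- are not proved, and the purely geometric route you propose (the quadrilateral of the four endpoints, triangle inequalities on the reconnected segments $AC,AD,BC,BD$) cannot succeed, because both statements are \emph{false} for arbitrary pairs of properly crossing primitive lattice segments. Concretely, the unit vertical $[(2,0),(2,1)]$ (length $1$, midpoint $(2,\tfrac12)$) properly crosses the primitive segment $[(0,0),(5,1)]$ (length $6$, midpoint $(\tfrac52,\tfrac12)$). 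So if $g=[(0,0),(5,1)]$ is the ground state at $(\tfrac52,\tfrac12)$ under some constraint set $\t$, the content of part~(i) is that no $\t$-compatible triangulation can contain that unit vertical. No manipulation of the two segments alone can yield this: the proof must use that $g$ is the \emph{minimal $\t$-compatible} edge at $y$ and that $\sigma_x$ lives inside a full $\t$-compatible triangulation $\sigma$ (which, in particular, also contains some edge $\sigma_y$ at $y$ that does not cross $\sigma_x$ and has $|\sigma_y|\geq|g|$). Your plan never addresses $\t$-compatibility of the ``manufactured'' candidate edge, nor whether it is a genuine primitive lattice edge with midpoint $y$; the auxiliary segments $AC,AD,BC,BD$ do not have midpoint $y$, so ``feeding their midpoints back into the ground-state minimality of $g$ at $y$'' has no meaning as stated. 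The same objection kills the proposed inequality $|\sigma_x|-|g|\geq 2\|x-y\|_\infty$ in (ii): in the example above the left side is $-5$. What is actually needed is the structure theory of the trees $\cG^\t_x$ and of which edges at one midpoint cross which edges at another --- precisely the input supplied by~\cite{AS}.

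A secondary issue in (iii): for the union over $\sigma$, the number of primitive vectors $v$ with $2\|v\|_1\in[|g|,|g|+\ell]$ is $O(|g|\,\ell)$, not $O(\ell^2)$, so the length window alone does not give $O(\ell^2)$ admissible edges per midpoint when $|g|$ is large; you must genuinely use the crossing condition (or the restricted parametrization of realizable edges at a midpoint) to cut this down. You gesture at this but do not carry it out. In short: the counting scaffolding is sound, but the lemma's real content lies in (i) and (ii), and those require the constraint/ground-state structure that the proposal omits.
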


Finally, the lemma below establishes the probability of having a top-to-bottom crossing of unit verticals in a random triangulation $\si$. 
By a ``top-to-bottom crossing of unit verticals in $\si$" we mean 
a straight line of length $k$ made up of $k$ vertical edges in $\si$ each of length $1$. 
The lemma below follows from~\cite[Theorems~8.1 and 8.2]{AS}.
\begin{lemma}\label{lem:toptobottom}
   Let $k\in\bbN$ and $\l\in(0,1)$ be fixed. There exist positive constants $c=c(\l,k)$, $\delta=\delta(\l,k)$ and $m_0=m_0(\l,k)$
    such that the following holds. 
   Let $R$ be an $m \times k$ rectangle inside $\Lokn$ with $m\geq m_0$. 
   Consider an arbitrary set of constraint edges $\t$ such that no edge from $\t$ intersects $R$.
   For any triangulation $\sigma\in\O^\t(n,k)$, 
   let $C_R(\sigma)$ be the number of disjoint top-to-bottom crossings of unit verticals from $\si$ that are inside $R$.
   Then, 
   $$
     \pr{C_R(\sigma) \leq \delta\, m}\leq e^{-c\, m}.
   $$
   Furthermore, let $\sigma,\sigma'$ be two triangulations sampled from the stationary distribution 
   $\mu$ given two different sets of constraint edges $\t,\t'$ such that no edge of $\t,\t'$  intersects $R$. 
   Then, there exists a coupling of  $\sigma,\sigma'$ such that the probability that they have less than $\delta \,m$ common top-to-bottom crossings of unit verticals 
   is at most $e^{-c\, m}$.
\end{lemma}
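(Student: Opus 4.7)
The plan is to combine the Ground State Lemma (Lemma~\ref{lemma:gsl}) with the exponential edge-length tail estimate (Lemma~\ref{lem:crosstail}) to show that, in the subcritical regime $\l<1$, a random triangulation is locally close to the all-unit-edges ground state inside $R$, and hence contains a linear number of top-to-bottom unit vertical crossings.

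First I would prove a uniform lower bound on the probability that a single fixed column of $R$ (sufficiently far from the left and right boundaries of $R$) contains a top-to-bottom crossing of unit verticals. Since no edge of $\t$ intersects $R$, the ground state $\bar\si$ restricted to a neighbourhood of this column is made entirely of unit edges, so $\bar\si$ already realises the desired crossing. A Peierls-type argument then applies: any $\si$ failing to have the crossing must contain at least one edge of $\ell_1$ length $\geq 2$ that crosses the column. The Gibbs factor $\l^{|\si|}$ with $\l<1$, together with the counting bound of Lemma~\ref{lem:setI} on edges intersecting a given ground state edge, makes the total $\mu^\t$-mass of such ``defective'' configurations summable and bounded by a constant strictly less than~$1$, yielding a uniform lower bound $p_0=p_0(\l,k)>0$ on the probability of a single-column crossing.

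Next I would promote this single-column estimate to a density statement. Partition $R$ into $\Theta(m/L)$ disjoint vertical sub-rectangles of width $L=C\log m$. By Lemma~\ref{lem:crosstail}, with probability at least $1-m\,e^{-c_2 L/2}=1-e^{-cm}$ (for $C$ large), no edge with midpoint in $R$ has $\ell_1$ length exceeding $L/4$; consequently the per-sub-rectangle crossing events depend, up to an event of probability $e^{-cm}$, on pairwise disjoint sets of midpoints. A disagreement percolation argument in the spirit of \cite{AS} then stochastically dominates them from below by independent Bernoulli$(p_0/2)$ random variables. Chernoff's inequality yields at least $\delta m$ disjoint top-to-bottom crossings with probability $\geq 1-e^{-cm}$, with $\delta=p_0/4$, proving the first claim.

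For the coupling part, I would run two copies of the Glauber dynamics $\si^t,(\si')^t$ from arbitrary compatible initial states, driven by a common source of randomness, for sufficiently many steps $t$. Applying Lemma~\ref{lem:crosstail} to each chain shows that, except on an event of probability $e^{-cm}$, every sub-rectangle of width $L$ is ``insulated'', i.e.\ no edge of length $>L/4$ has its midpoint there. On each insulated sub-rectangle a block coupling forces $\si^t$ and $(\si')^t$ to adopt the same unit-vertical crossing configuration with probability $\geq p_0/2$, and these local couplings are essentially independent across well-separated sub-rectangles; combining with Chernoff and passing to equilibrium via Lemma~\ref{lem:tailedge} transfers the bound to $\mu^\t$ and $\mu^{\t'}$. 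The main obstacle throughout is to prevent the arbitrary boundary data $\t,\t'$ from propagating influence deep into $R$: this is exactly what Lemma~\ref{lem:crosstail} delivers, letting one effectively treat $R$ as if it had empty boundary, at a cost exponentially small in~$m$.
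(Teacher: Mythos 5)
The paper does not prove this lemma: it is imported verbatim from \cite{AS} (Theorems~8.1 and~8.2), together with Lemmas~\ref{lem:tailedge}--\ref{lem:setI}, so there is no internal argument to compare against. Your proposal is an attempt to rederive it from the paper's other imported estimates, and it has a genuine gap at the key quantitative step.

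The gap is in your first step --- the uniform lower bound $p_0>0$ for a single-column crossing --- and it propagates to everything after. The tail estimates you invoke (Lemmas~\ref{lem:tailedge} and~\ref{lem:crosstail}) have the form $c_1e^{-c_2\ell}$ with constants that are not quantitatively controlled; at the relevant scale $\ell=O(1)$ they may exceed $1$ and are vacuous, so they cannot show that a given midpoint carries its unit ground-state edge with probability bounded away from zero. The Peierls argument you sketch instead --- shrink the defective edges crossing the column and pay $\lambda^{L}$ against the entropy factor supplied by Lemma~\ref{lem:setI} --- is precisely the argument that only works for small $\lambda$ (this is why \cite{CMSS14} requires $\lambda\le 1/8$): for $\lambda$ near $1$ the number of ways to regrow total length $L$ beats $\lambda^{L}$, and the sum over defects is not bounded by a constant less than $1$. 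Establishing a positive density of unit-vertical crossings for \emph{all} $\lambda\in(0,1)$ is exactly the content of \cite{AS}, Theorem~8.1, and needs the Lyapunov-function machinery of that paper; it cannot be recovered from the exponential tails alone. The same issue undermines the coupling half: the claim that a block coupling matches the two samples on a sub-rectangle with probability $\ge p_0/2$ is the content of \cite{AS}, Theorem~8.2, not a consequence of running two Glauber chains with common randomness (which again is only known to contract for small $\lambda$). Finally, beware a latent circularity: spatial-mixing/disagreement-percolation inputs of the kind you invoke are, in this paper, \emph{derived from} Lemma~\ref{lem:toptobottom} (see Lemma~\ref{lem:decay}), so they are not available for proving it.
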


\section{Estimates via canonical paths}\label{sec:canonical}
We recall that the relaxation time $\trel$ is defined as the inverse of the spectral gap of the Markov chain. 
We start by showing that a direct application of the usual canonical path argument~\cite{AlSi} yields an exponential bound on the relaxation time of the Markov chain that is valid for all $\l\leq 1$. 
We recall the well known estimate relating $\trel$ and $\tmix$ (see, e.g., \cite[Theorem 12.3]{LPW}):
   \begin{equation}
\tmix\leq \trel(2+\log(1/\mu_*)),
      \label{trelmustart}
   \end{equation}
   where $\mu_*=\min_{\sigma} \mu(\sigma)$.
%
%
\begin{theorem}\label{thm:expmixing}
  There exists a positive constant $C$ such that for any $\lambda\leq 1$, $n,k\in\bbN$ and any set of constraint edges $\t$, the Glauber dynamics on $\O^\t(n,k)$  
  satisfies
   $$
      T_\rel \leq \exp(Ckn).
   $$
\end{theorem}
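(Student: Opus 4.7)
The plan is to apply the classical canonical paths technique. For each ordered pair $\sigma,\eta\in\O^\t(n,k)$, I would define a canonical path $\gamma_{\sigma,\eta}$ in the flip graph by concatenating two halves: a length-non-increasing sequence of flips from $\sigma$ to the ground-state triangulation $\bar\sigma$, followed by the reverse of a length-non-increasing sequence from $\eta$ to $\bar\sigma$. The existence of such sequences is guaranteed by Lemma~\ref{lemma:gsl} and the structural remark immediately after it.

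The key observation is that since $\lambda\le 1$, any length-non-increasing flip can only increase the Gibbs weight $\mu$. Hence every intermediate state $\xi$ on the first half of $\gamma_{\sigma,\eta}$ satisfies $\mu(\xi)\ge\mu(\sigma)$, and symmetrically $\mu(\xi)\ge\mu(\eta)$ on the second half. Given an undirected flip edge $e=\{\xi,\xi'\}$, let $\xi_l$ be the endpoint with the longer flipped edge; the flip out of $\xi_l$ is length-non-increasing, so its acceptance probability in~\eqref{probs} is at least $1/2$ and consequently $Q(e)=\mu(\xi_l)P(\xi_l,\xi_l')\ge \mu(\xi_l)/(2|\Lkn|)$. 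For any canonical path whose first half traverses $e$, the role of $\xi_l$ is played by $\xi$, and so $\mu(\sigma)/\mu(\xi_l)\le 1$, yielding
\[
    \frac{\mu(\sigma)\mu(\eta)}{Q(e)}\le 2|\Lkn|\,\mu(\eta).
\]
Summing first over $\eta$ using $\sum_\eta\mu(\eta)=1$, and then over the at most $|\O^\t(n,k)|$ triangulations $\sigma$ whose length-non-increasing path to $\bar\sigma$ uses $e$, bounds the first-half contribution to the congestion by $2|\Lkn|\cdot|\O^\t(n,k)|$; the second half is bounded symmetrically. The Sinclair--Jerrum canonical paths inequality therefore gives
\[
    \trel \le 4\,|\Lkn|\cdot D\cdot|\O^\t(n,k)|,
\]
where $D$ is the maximum length of a canonical path, polynomial in $n,k$ (bounded by twice the flip-graph diameter between $\bar\sigma$ and an arbitrary compatible triangulation).

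It remains to combine these polynomial factors with the exponential bound $|\O^\t(n,k)|\le\exp(Cnk)$, which holds uniformly in $\t$ because the non-constraint part of a triangulation is determined by the $O(nk)$ midpoints in $\Lkn$, each admitting only a uniformly bounded number of consistent edge choices once the boundary is fixed. Together these yield $\trel\le\exp(C'nk)$, as required. The main obstacle I anticipate is making the exponential counting bound genuinely uniform in the boundary condition~$\t$, since long constraint edges in the annulus $Q_{N,n,k}\setminus\Lkn$ can in principle force long admissible inner edges and thereby inflate both $|\O^\t(n,k)|$ and $D$; however, this is a routine combinatorial fact about unimodular triangulations, and all remaining steps are standard applications of the canonical paths machinery.
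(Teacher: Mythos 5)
Your proposal is correct, and it is the same basic strategy as the paper's: canonical paths whose congestion is controlled by the monotone-path structure toward the ground state (the remark after Lemma~\ref{lemma:gsl}), the fact that $\lambda\le 1$ makes length-non-increasing flips weight-non-decreasing, and Anclin's bound $|\O^\t(n,k)|\le 2^{|\Lkn|}$. The one genuine difference is the routing: the paper uses a geodesic $\Gamma_{\sigma,\sigma'}$ between the two endpoints, for which Lemma~\ref{lem:distances} gives the coordinatewise bound $|\eta_x|\le|\sigma_x|\lor|\sigma'_x|$, and then deduces $\mu(\sigma)\mu(\sigma')/\mu(\eta)\le 1$ via the partition-function lower bound $Z\ge\prod_x\lambda^{|\bar\sigma_x|}$; you instead route every path through the hub $\bar\sigma$ and use monotonicity of $\mu$ along the descent directly, which lets one of the two sums in the congestion collapse to $\sum_\eta\mu(\eta)=1$ and yields a single factor $|\O^\t(n,k)|$ where the paper has $|\O^\t(n,k)|^2$. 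Both land at $\exp(O(kn))$, so the improvement is immaterial here, but your version is marginally cleaner. Your anticipated obstacle is a non-issue: Anclin's bound already holds uniformly over arbitrary constraint edges, which is exactly how the paper invokes it; the only genuinely informal point (shared by the paper, which writes $|\Gamma_{\sigma,\sigma'}|=O(nk(n+k))$) is that the path length $D$ depends on the maximal constraint-edge length, but any polynomial bound there is absorbed into the exponential.
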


Before proving the above theorem we recall a useful structural fact. Given a set of constraint edges $\t$ and a midpoint $x$, consider the set $\O^\t_x$ of possible values of $\si_x$, as $\si$ ranges in $\O^\t(n,k)$. Two edges $\si_x, \si'_x\in \O^\t_x$ are said to be {\it neighbors\/} if $\si_x$ is flippable
to $\si'_x$ within some triangulation $\si\in\O^\t(n,k)$. Then it is known (see, e.g., \cite{CMSS14})
that the induced graph with vertex set $\O^\t_x$ is a tree $\cG^\t_x$. We will make use of the following technical lemma; see \cite[Proposition~3.8]{CMSS14} for the proof.
\begin{lemma}\label{lem:distances}
Fix a set of constraint edges $\t$. For any midpoint $x$ and any two triangulations $\si,\si'\in\O^\t(n,k)$, the distance between $\sigma$ and $\si'$ in the flip graph  is equal to 
   $\sum_{x\in\Lkn} \kappa(\sigma_x,\si'_x)$, where $\kappa(\sigma_x,\si'_x)$ is the distance between $\sigma_x$ and $\si'_x$ in the tree $\cG^\t_x$.
\end{lemma}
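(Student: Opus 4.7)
The formula decomposes into two inequalities, so the plan is to attack each direction separately.

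\textbf{Lower bound.} The inequality
$$d_{\mathrm{flip}}(\sigma,\sigma') \;\geq\; \sum_{x\in\Lkn}\kappa(\sigma_x,\sigma'_x)$$
is the easy direction. A single flip step modifies the edge at exactly one midpoint, and if it modifies the edge at $x$ it replaces $\sigma_x$ by a neighbour of $\sigma_x$ in $\cG^\t_x$ (this is literally the definition of adjacency in the tree $\cG^\t_x$). Thus any flip sequence realising $d_{\mathrm{flip}}(\sigma,\sigma')$, when restricted to a fixed midpoint $x$, yields a walk in $\cG^\t_x$ from $\sigma_x$ to $\sigma'_x$. A walk in a tree between two vertices has length at least the tree distance between them, so the contribution of midpoint $x$ to the flip sequence is at least $\kappa(\sigma_x,\sigma'_x)$. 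Since each individual flip is counted in exactly one midpoint's walk, summing over $x$ gives the bound.

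\textbf{Upper bound.} For the reverse inequality I would induct on $D(\sigma,\sigma'):=\sum_{x}\kappa(\sigma_x,\sigma'_x)$. The case $D=0$ forces $\sigma=\sigma'$ and is trivial. For the inductive step it suffices to produce a single flip, legal in $\sigma$, that transforms $\sigma_x$ into the unique tree-neighbour of $\sigma_x$ lying on the path to $\sigma'_x$ in $\cG^\t_x$. Such a flip decreases $\kappa(\sigma_x,\sigma'_x)$ by exactly $1$ and leaves $\kappa(\sigma_y,\sigma'_y)$ unchanged for every $y\neq x$; therefore $D$ decreases by exactly one and induction closes the argument.

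\textbf{Main obstacle.} The hard step is exhibiting a \emph{progressing} flip at each stage: the tree-neighbour $e$ of $\sigma_x$ on the tree path to $\sigma'_x$ exists by definition in $\cG^\t_x$, but the corresponding diagonal flip must be executable inside $\sigma$ itself, not merely inside some other compatible triangulation witnessing the adjacency in $\cG^\t_x$. I would handle this via an extremal/descent argument on the set $A=\{x\in\Lkn:\sigma_x\neq\sigma'_x\}$: choose $x\in A$ minimising a suitable criterion (for example the $\ell_1$-distance, inside the tree $\cG^\t_x$, from $\sigma_x$ to $\sigma'_x$, breaking ties lexicographically by midpoint position). If at this extremal $x$ the desired flip is blocked in $\sigma$, the obstruction must come from the two sides of the would-be parallelogram whose diagonal would carry $\sigma_x$ to~$e$; inspection via the Ground State Lemma (which pins down the minimal-length configuration consistent with the surrounding edges) shows that those obstructing edges themselves cannot agree with the corresponding edges of $\sigma'$, hence lie in $A$ and violate the extremality of~$x$. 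The delicate point I expect to have to work through carefully is verifying that one can always pick the extremal criterion so that this propagation of disagreement terminates, which is exactly where the tree structure of $\cG^\t_x$ (as opposed to a general graph) is used: in a tree the neighbour $e$ on the path to $\sigma'_x$ is unique, so there is no ambiguity in the flip we need to perform.
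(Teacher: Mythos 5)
Your lower bound is correct, and your inductive skeleton for the upper bound (reduce everything to producing one flip, legal in the current triangulation, that moves a single disagreeing coordinate one step along its tree path) is indeed the right frame — note that the paper itself does not prove this lemma but defers it to \cite[Proposition~3.8]{CMSS14}, whose entire content is precisely that ``progressing flip'' step. This is where your proposal has a genuine gap. Your extremal criterion — choose $x$ in the disagreement set $A$ minimising $\kappa(\si_x,\si'_x)$, ties broken lexicographically — does not make the descent close. Even granting the (true, but not Ground-State-Lemma) fact that a blocked flip at $x$ forces one of the four edges of the two triangles adjacent to $\si_x$ to disagree with $\si'$ (if all four agreed, $\si'_x$ would have to be the opposite diagonal of the quadrilateral they bound, forcing a parallelogram and making $\si_x$ directly flippable to $\si'_x$), this only produces some $y\in A$ adjacent to $x$ in the triangulation. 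Nothing forces $\kappa(\si_y,\si'_y)<\kappa(\si_x,\si'_x)$, nor that $y$ precedes $x$ lexicographically, so extremality of $x$ is not violated and the propagation of disagreement need not terminate. As written, the hard step is asserted, not proved.

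The argument that actually works (and is the content of \cite[Proposition~3.8]{CMSS14}) uses edge lengths, not tree distances, and needs genuine geometric input about unimodular triangulations rather than only the tree structure of $\cG^\t_x$: each non-minimal edge at $x$ has a unique strictly shorter neighbour in $\cG^\t_x$, and lengths decrease along the tree toward the minimal edge. One then picks $x\in A$ maximising $|\si_x|\vee|\si'_x|$ and works inside whichever triangulation carries the longer edge (so you must allow flips in $\si'$ as well, not only in $\si$, as your sketch does); by the monotone tree structure the first step of the path from that longer edge toward the other is to its unique shorter neighbour, and maximality of the length is what rules out an obstruction: if the downward flip were blocked, an adjacent triangle would have a non-canonical apex and hence contain an edge strictly longer than $\si_x$, and the geometry of edges crossing near the midpoint $x$ then forces a disagreeing edge longer than $|\si_x|\vee|\si'_x|$, contradicting the choice of $x$. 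None of this length-based machinery appears in your proposal, and with the $\kappa$-minimising criterion it cannot be supplied by a routine patch; I would either reproduce the argument of \cite[Proposition~3.8]{CMSS14} or simply cite it, as the paper does.
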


\begin{proof}[{\bf Proof of Theorem~\ref{thm:expmixing}}]
   For each pair $\sigma,\sigma'\in\O^\t(n,k)$, let $\Gamma_{\sigma,\sigma'}$ be a shortest path between $\sigma$ and $\sigma'$ in the flip graph.
   From Lemma~\ref{lem:distances}, we have that for any triangulation $\eta$ in the path $\Gamma_{\sigma,\sigma'}$ and any midpoint $x$,
   \begin{equation}
      |\eta_x| \leq |\sigma_x| \lor |\sigma'_x|.
      \label{eq:monotonicity}
   \end{equation}
We can also assume that $\Gamma_{\sigma,\sigma'}$ is a {\it monotone\/} path in the sense that it is
composed of a sequence of edge-decreasing flips followed by a sequence of edge-increasing flips.
   
   Now, for any function $f\colon\Omega\to\mathbb{R}$,
   we have
   $$
   f(\sigma)-f(\sigma')=\sum_{(\eta,\eta') \in \Gamma_{\sigma,\sigma'}}\nabla_{\eta,\eta'}f,
   $$
    where we employ the notation $\nabla_{\eta,\eta'}f = f(\eta)-f(\eta')$. For simplicity, below we write $\mu$ instead of $\mu^\t$ and $\O$ instead of $\O^\t(n,k)$. 
    Thus, using Cauchy-Schwarz, the variance of $f$ with respect to $\mu$ satisfies 
   \begin{align}
      \var(f) 
      &= \frac{1}{2}\sum_{\sigma,\sigma'}\mu(\sigma)\mu(\sigma')(f(\sigma)-f(\sigma'))^2\nonumber\\
      &\leq \frac{1}{2}\,\cC(\O)\sum_{\eta,\eta'\colon \eta\sim\eta'}\mu(\eta)p(\eta,\eta')(\nabla_{\eta,\eta'}f )^2, \label{eq:ubvar}
   \end{align}
   where 
   $p(\eta,\eta')$ is the probability that the Glauber chain goes from $\eta$ to $\eta'$ in one step,
   $\eta\sim\eta'$ denotes that $\eta$ and $\eta'$ are adjacent triangulations,
   and 
   we use the notation 
 \begin{align}
      \cC(\O)= \max_{\eta,\eta'\colon \eta\sim\eta'}\sum_{\sigma,\sigma'\colon (\eta,\eta') \in \Gamma_{\sigma,\sigma'}}\frac{\mu(\sigma)\mu(\sigma')}{\mu(\eta)p(\eta,\eta')}|\Gamma_{\sigma,\sigma'}|,
       \label{cong}
   \end{align}
   for the so-called ``congestion ratio."
   Now assume that $p(\eta,\eta')\geq p(\eta',\eta)$, otherwise use reversibility to write $\mu(\eta)p(\eta,\eta')$ as $\mu(\eta')p(\eta',\eta)$.
   With this assumption we have that $p(\eta,\eta')\geq \frac{1}{2|\Lkn|}$. Also, from Lemma \ref{lem:distances} we have $|\Gamma_{\sigma,\sigma'}|= O(nk(n+k))$.
   The key property we use is that~\eqref{eq:monotonicity} gives
   $$
      \frac{\mu(\sigma)\mu(\sigma')}{\mu(\eta)}
      = Z^{-1}\prod_x \lambda^{|\sigma_x|+|\sigma'_x|-|\eta_x|} 
      \leq Z^{-1} \prod_x \lambda^{|\sigma_x| \land |\sigma'_x| }
      \leq 1,
         $$
         where we used the bound $$Z\geq \prod_x \lambda^{|\bar \sigma_x|  }
         \geq \prod_x\lambda^{|\sigma_x| \land |\sigma'_x| }.$$
   Plugging this into~\eqref{cong}, we obtain
    \begin{align}\label{cong2}
\cC(\O)\leq  Cnk(n+k)\,|\Lkn|\,|\O^\t(n,k)|^2.
    \end{align}
   Using Anclin's bound \cite{Anclin:2003jb} one has $|\O^\t(n,k)|\leq 2^{| \Lkn|}$. The proof is then concluded by recalling that $\trel$ is the smallest constant $\gamma$ such that the inequality 
   $$
   \var(f)\leq \frac{\gamma}2\sum_{\eta,\eta'\colon \eta\sim\eta'}\mu(\eta)p(\eta,\eta')(\nabla_{\eta,\eta'}f )^2
   $$
   holds for all functions $f:\O^\t(n,k)\mapsto\bbR$. 
\end{proof}

\subsection{An improved canonical paths argument}\label{sec:polymixing}
Here we establish a first polynomial bound on the relaxation time.  
The result here can be formulated as follows. 

\begin{theorem}\label{thm:polybound}
Fix $\l\in(0,1)$ and $k\in\bbN$.    There exists a positive constant $c=c(\lambda,k)$ such that for any boundary condition $\t=\{\t_x\}$ such that  $|\t_x|\leq n/4$ for all $x$, the relaxation time of the Glauber chain in $\O^\t(n,k)$
   satisfies  $$\trel\leq n^c.$$
\end{theorem}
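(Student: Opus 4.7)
The plan is to apply the~\cite{CLMST} refinement of the canonical paths method, which bounds $\trel$ in terms of two quantities: (a)~the time $T$ needed for the chain to enter a ``good'' subset $\Omega'\subset\Omega^\tau(n,k)$ with probability close to $1$, and (b)~the congestion ratio $\cC(\Omega')$ of canonical paths restricted to $\Omega'$. For the desired conclusion it suffices to exhibit $\Omega'$ such that both quantities are polynomial in $n$.

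The natural choice, dictated by Lemma~\ref{lem:crosstail}, is
$$
\Omega':=\{\sigma\in\Omega^\tau(n,k):\,|\sigma_y|\leq|\bar\sigma_x|+K\log n\text{ whenever }\sigma_y\cap\bar\sigma_x\neq\emptyset\},
$$
with $K=K(\lambda,k)$ chosen large. Since $|\tau_x|\leq n/4$, the region $Q^0_{N,n,k}$ has diameter $O(n)$, so every ground-state edge satisfies $|\bar\sigma_x|=O(n)$ and the maximum edge length $M$ in any $\sigma\in\Omega^\tau(n,k)$ is likewise $O(n)$. Hence Lemma~\ref{lem:crosstail} applies for any $t\geq c_1kn(M+\log n)=O(n^2)$; choosing $\ell=K\log n$ there and taking a union bound over the $O(n)$ midpoints $y$ (and the $O(n)$ ground-state edges that a given $\sigma_y$ could cross) shows $\pr{\sigma^t\notin\Omega'}\leq n^{-K'}$ with $K'\to\infty$ as $K\to\infty$. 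Letting $t\to\infty$ and using stationarity gives the same bound for $\mu^\tau(\Omega'^c)$, so $T=O(n^2)$ handles~(a).

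For~(b), I would use the same monotone canonical paths through the ground state as in the proof of Theorem~\ref{thm:expmixing}. Monotonicity keeps intermediate triangulations inside $\Omega'$ whenever the endpoints lie there, and the weight-ratio estimate
$$
\frac{\mu(\sigma)\mu(\sigma')}{\mu(\eta)}\leq\lambda^{\sum_x(|\sigma_x|\wedge|\sigma'_x|-|\bar\sigma_x|)}
$$
still holds. The crucial new input is Lemma~\ref{lem:setI}: within $\Omega'$ each deviating edge is confined to an $O(\log n)$-neighborhood of its overlapping ground-state edge, with only $O(\log^4 n)$ admissible choices per midpoint. Together with the Gibbs-weight suppression of configurations with large excess $\sum_x(|\sigma_x|-|\bar\sigma_x|)$, this should yield $\cC(\Omega')\leq n^{c'}$ for a constant $c'=c'(\lambda,k)$.

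The main obstacle will be turning the heuristic of step~(b) into a rigorous polynomial bound. The naive argument of Theorem~\ref{thm:expmixing} swallows the combinatorial factor $|\Omega^\tau|^2=\exp(O(kn))$ with only the trivial estimate $\mu(\sigma)\mu(\sigma')/\mu(\eta)\leq 1$; here one must instead encode each pair $(\sigma,\sigma')$ whose monotone path passes through a prescribed $(\eta,\eta')$ by specifying the set of midpoints where $\sigma$ or $\sigma'$ deviates from $\bar\sigma$ together with the deviating edges themselves. The local counting bound $|\bigcup_\sigma I_g(\sigma,\ell)|\leq c\,\ell^4$ from Lemma~\ref{lem:setI} keeps the per-midpoint encoding cost polylogarithmic, while the factor $\lambda^{\sum_x(|\sigma_x|\wedge|\sigma'_x|-|\bar\sigma_x|)}$ is what must absorb the multiplicative explosion arising from many deviating midpoints. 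Once this encoding argument is made precise, invoking the \cite{CLMST} bound with $T=O(n^2)$ and $\cC(\Omega')\leq n^{c'}$ delivers $\trel\leq n^c$ with $c=c(\lambda,k)$, as asserted.
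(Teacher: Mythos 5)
Your overall architecture is the paper's: define $\Omega'$ via Lemma~\ref{lem:crosstail} with $\ell=O(\log n)$ (using Lemma~\ref{lengthle} to get $M=O(n)$ so that a burn-in of $T=O(n^2)$ suffices), then invoke the burn-in canonical paths bound of~\cite{CLMST} (Lemma~\ref{lem:canonicalburnin}). Part~(a) is essentially the paper's argument. The genuine gap is in part~(b), which you yourself flag as ``the main obstacle'': you propose to reuse the \emph{global} monotone paths of Theorem~\ref{thm:expmixing} and to absorb the combinatorial explosion by an encoding argument backed by the weight ratio $\lambda^{\sum_x(|\sigma_x|\wedge|\sigma'_x|-|\bar\sigma_x|)}$. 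This mechanism cannot work: the exponent involves the pointwise \emph{minimum}, so the ratio gives no suppression at all for pairs $(\sigma,\sigma')$ that deviate from $\bar\sigma$ on disjoint sets of midpoints, and there are $\exp(\Omega(n))$ such pairs in $\Omega'$ even after the $O(\log n)$ length truncation. Concretely, for a transition $(\eta,\eta')$ with $\eta=\bar\sigma$, every pair with $\sigma\wedge\sigma'=\bar\sigma$ and $\sigma'$ containing the flipped edge is routed through it, and the congestion is of order $\sum_{\sigma'}\mu(\sigma')/\mu(\bar\sigma)=Z\lambda^{-|\bar\sigma|}$, which is $\exp(\Theta(nk))$ when $\lambda$ is close to $1$ (the entropy of triangulations dominates the energy). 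No per-midpoint encoding with polylogarithmic cost defeats this, because the obstruction is entropic, not energetic.

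The missing idea is to \emph{localize} the canonical paths. The paper partitions $[0,n]\times[0,k]$ into vertical slabs of width $2\ell=O(\log n)$ and transforms $\sigma$ into $\sigma'$ slab by slab from left to right; this is legitimate precisely because of Lemma~\ref{lem:setI}(ii), which you cite but do not exploit for the path construction: inside $\Omega'$ the flips needed to fix the edge at $x$ all have midpoints within distance $O(\ell)$ of $x$. With such paths, a transition $(\eta,\eta')$ occurring in slab $i$ forces $\eta$ to agree with $\sigma'$ on slabs $1,\dots,i-2$ and with $\sigma$ on slabs $i+2,\dots$, so the only free summation in the congestion ratio is over partial triangulations of an $O(\log n)\times k$ window around slab $i$; Anclin's bound $2^{O(k\ell)}=n^{O(k)}$ then yields $\cC(\Omega')\leq n^{c'}$ directly, with no encoding needed. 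Without this localization your step~(b) does not close, and the proof is incomplete at its crucial point.
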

The strategy of the proof is as follows.  
We shall identify a subset $\O'$ of triangulations such that the congestion ratio $\cC(\O')$ defined as in \eqref{cong} but restricted to $\O'$ satisfies a polynomial bound, in contrast with the exponential bound in \eqref{cong2}. Using a key input from \cite{AS}, we show that the Glauber chain enters the set $\O'$ with large probability after a {\em burn-in} time of $T=O(n^2)$ steps. Following an idea already used in \cite{CLMST} we establish the desired upper bound on $\trel$ by combining the above facts. 

We start with a deterministic estimate. 
\begin{lemma}\label{lengthle}
   Let $\sigma\in\O^\t(n,k)$ be a triangulation of the $n\times k$ rectangle with  boundary condition $\t=\{\t_x\}$ such that 
   $|\t_x|\leq L$ for all $x$.
   Then, all edges of $\sigma$ are contained in the rectangle $[-L,n+L]\times [-L,k+L]$.
\end{lemma}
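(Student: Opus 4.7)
The bound on the $y$-coordinate is trivial: every lattice point of $Q^0_{N,n,k}$ has $y$-coordinate in $\{0,\ldots,k\}\subset[-L,k+L]$. For the $x$-coordinate bound I argue by contradiction; by left-right symmetry it suffices to rule out the existence of a non-constraint edge $e=\{Q,P\}$ of $\sigma$ whose endpoint $P$ has $x$-coordinate strictly greater than $n+L$.

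The main structural observation driving the proof is the following: if an edge of $\sigma$ has both endpoints with $x$-coordinate strictly greater than $n$, then its midpoint lies in $Q_{N,n,k}\setminus\Lkn$, so it is a constraint edge and hence has $\ell_1$-length at most $L$; in particular, its horizontal span is at most $L$. Suppose now for contradiction that $e=\{Q,P\}$ is a non-constraint edge of $\sigma$ whose endpoint $P$ has $x$-coordinate greater than $n+L$. Since the midpoint of $e$ lies in $\Lkn$ and therefore has $x$-coordinate at most $n$, the other endpoint $Q$ has $x$-coordinate strictly less than $n-L$, so $e$ has horizontal span greater than $2L$. I select a non-constraint edge $e^*=\{Q^*,P^*\}$ for which the $x$-coordinate of $P^*$ is maximal (breaking ties as needed), and examine the two triangles of $\sigma$ adjacent to $e^*$ with respective third vertices $V$ and $V'$. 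A case split on $V$ according to whether its $x$-coordinate is greater than $n$ or at most $n$, combined with the key observation, shows that in every subcase either $P^*V$ or $Q^*V$ must itself be another non-constraint edge; the unimodularity of the triangle (area $1/2$) further constrains $V$ to a one-parameter lattice family determined by $e^*$. The plan is to check that at least one of the two adjacent triangles must produce a non-constraint edge whose rightmost endpoint has $x$-coordinate strictly greater than that of $P^*$, contradicting the extremal choice of $e^*$.

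The main obstacle will be turning this case analysis into a genuine contradiction. The subtle point is that a single adjacent triangle may yield a new non-constraint edge whose rightmost endpoint only matches that of $P^*$ rather than strictly exceeding it; overcoming this requires using both adjacent triangles jointly together with the diophantine restriction coming from unimodularity --- which, in the simplest ``flat'' subcase where the third vertex shares the $y$-coordinate of $P^*$, forces the two candidate third vertices to have $x$-coordinates exactly one larger and one smaller than that of $P^*$, the former providing the strict contradiction --- or alternatively replacing the extremal criterion by a lexicographic one (e.g.\ on the pair consisting of the rightmost endpoint's $x$-coordinate and the $\ell_1$-length of the edge) that is provably strictly decreased at each iteration of the analysis.
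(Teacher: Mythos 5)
Your setup is sound: the reduction to the horizontal bound, the observation that any edge with both endpoints strictly to the right of $x=n$ has its midpoint outside $\Lkn$ and is therefore a constraint edge of length at most $L$, and the deduction that an offending non-constraint edge must span more than $2L$ horizontally are all correct. But the proof stops exactly where the work begins: the contradiction with the extremal choice of $e^*$ is never derived, and you say so yourself. Examining one flanking triangle only ever produces a non-constraint edge whose rightmost endpoint has $x$-coordinate \emph{at most} that of $P^*$ (equal to it when the apex $V$ lies left of $x=n$, and possibly as small as $p-L$ when $V$ lies right of $x=n$, where $p$ is the $x$-coordinate of $P^*$), so maximality is never violated. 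Neither proposed repair closes this. The ``flat subcase'' claim is incorrect: unimodularity only forces the apex on a given side of $e^*$ to lie in a coset $\{(p+1,q)+t(P^*-Q^*):t\in\mathbb{Z}\}$ of the direction of $e^*$, so it can be a far vertex such as $(p'+1,q')$ rather than $(p+1,q)$; the local analysis does not exclude the configuration in which \emph{both} apexes are far vertices, and then it yields nothing. The lexicographic variant is likewise unsubstantiated: when the apex is far, the new edge $P^*V$ shares its rightmost endpoint with $e^*$ and there is no control on whether its length increases or decreases, and in any case a quantity that ``strictly decreases at each iteration'' cannot contradict a maximal choice without an additional invariant guaranteeing that the iteration never leaves the offending regime.

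The missing idea is supplied in the paper by a different route entirely. By the Ground State Lemma, the ground state (all of whose edges have length at most $L$, hence lie in the box) is joined to $\sigma$ by a monotone flip path, so it suffices to rule out a single length-increasing flip creating an edge outside $[-L,n+L]\times[-L,k+L]$ while all current edges lie inside it. For such a flip, the offending new endpoint $v$ is the apex of one of the two triangles flanking the flipped edge $\sigma_x$; the two sides of that triangle joining $v$ to the endpoints of $\sigma_x$ must then be constraint edges, and since one endpoint $u$ of $\sigma_x$ has horizontal coordinate in $[0,n]$, one of these constraint edges has $\ell_1$-length exceeding $L$ --- an immediate contradiction. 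If you insist on a purely static argument, you would need to complete the angular analysis around the extremal vertex $P^*$ (every neighbour of which is either within $\ell_1$-distance $L$ or to the left of $x=n-L$, so the fan of triangles around $P^*$ must contain a far/near transition), and that requires genuinely more work than the proposal contains.
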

\begin{proof}
   First, note that the ground state triangulation must satisfy the lemma, because all edges have size at most $L$. 
   Now it is enough to show that there cannot be an increasing edge $\sigma_x$ with $x\in\Lambda_{n,k}$ 
   such that $\sigma_x^x \not\subset [-L,n+L]\times[-L,k+L]$ but all edges of $\sigma$ are inside $[-L,n+L]\times [-L,k+L]$. 
   We use the notation $\sigma^x$ to denote the triangulation obtained from $\sigma$ by flipping $\sigma_x$.
   In order to achieve a contradiction, 
   assume that such an increasing edge $\sigma_x$ exists and assume that $\sigma_x^x$ is at the left part of the triangulation (i.e., that 
   its leftmost endpoint has horizontal coordinate smaller than $-L$).
   Let $\sigma_y,\sigma_z$ be the triangle containing $\sigma_x$ such that the vertex $v=\sigma_y\cap\sigma_z$ has horizontal coordinate smaller than
   $-L$. 
   Since $\sigma$ is completely inside $[-L,n+L]\times [-L,k+L]$, we obtain that $\si_y$ and $\si_z$ are constraint edges. 
   Also, since $x\in\Lambda$, $\sigma_x$ must have one endpoint $u$ of horizontal coordinate at least $0$.
   This gives that $\|v-u\|_1>L$, and consequently, either $\sigma_y$ or $\sigma_z$ has length larger than $L$, 
   which is a contradiction.
\end{proof}
Next, we formulate a general upper bound on $\trel$ in terms of the congestion ratio of a subset $\O'$ of the state space $\O$, a time $T$, and the probability needed to reach $\O'$ within time $T$. A version of this lemma appears in \cite[Theorem 2.4]{CLMST}. For the reader's convenience we give a detailed proof. 

\begin{lemma}[Canonical paths with burn-in time]\label{lem:canonicalburnin}
   Consider a Markov chain with state space $\O$, irreducible transition matrix $p(\cdot,\cdot)$ and reversible probability measure $\mu$. Let $\Omega'\subset\Omega$ be a subset so that between each 
   $\si,\si'\in\Omega'$ there is a path $\Gamma_{\si,\si'}$ in the Markov chain that is entirely contained in $\Omega'$. Define the congestion ratio 
   \begin{align}\label{congr1}
      \cC(\O') = \max_{\eta,\eta'\in\Omega'\colon \eta\sim\eta'} \sum_{\si,\si'\colon (\eta,\eta')\in \Gamma_{\si,\si'}}
         \frac{\mu(\si)\mu(\si')|\Gamma_{\si,\si'}|}{\mu(\eta)p(\eta,\eta')},
   \end{align}
   where the sum is over all pairs of states $\si,\si'\in\O'$ so that the path $\Gamma_{\si,\si'}$ 
   uses the transition $(\eta,\eta')$.
   Fix $T\in\bbN$ and let $\rho$ be a lower bound on the probability that at time $T$ the chain is inside $\Omega'$, uniformly over the starting state in $\Omega$. Then
   the relaxation time satisfies
   $$
      T_\mathrm{rel} \leq \frac{6\,T^2}{\rho} + \frac{3\,\cC(\O')}{\rho^2}.
   $$
\end{lemma}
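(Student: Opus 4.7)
The plan is to prove the Poincar\'e inequality
\[
\var(f)\le\left(\frac{6T^2}{\r}+\frac{3\,\cC(\O')}{\r^2}\right)\cE(f,f)\qquad\text{for all }f:\O\to\bbR,
\]
where $\cE(f,f):=\tfrac12\sum_{\eta\sim\eta'}\mu(\eta)p(\eta,\eta')(\nabla_{\eta,\eta'}f)^2$; the statement then follows from the variational characterization of $\trel$ already invoked at the end of the proof of Theorem~\ref{thm:expmixing}. I start from the identity $\var(f)=\tfrac12\sum_{\sigma,\sigma'}\mu(\sigma)\mu(\sigma')(f(\sigma)-f(\sigma'))^2$ and, for each ordered pair $(\sigma,\sigma')$, introduce two \emph{intermediate} variables $\eta,\eta'\in\O'$ via the triangle inequality
\[
(f(\sigma)-f(\sigma'))^2\le 3\bigl[(f(\sigma)-f(\eta))^2+(f(\eta)-f(\eta'))^2+(f(\eta')-f(\sigma'))^2\bigr].
\]
I then average $\eta$ against $\nu_\sigma(\cdot):=p^T(\sigma,\cdot)\mathbf{1}_{\O'}/p^T(\sigma,\O')$ and $\eta'$ against $\nu_{\sigma'}$. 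Since $p^T(\sigma,\O')\ge\r$ uniformly in $\sigma$ by hypothesis, for any nonnegative $g$ depending only on the chain one has $\bbE_{\nu_\sigma}[g]\le\r^{-1}\Espace{g(\sigma^T)}{\sigma}$, which is the mechanism by which the factors $1/\r$ will enter the final bound.

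For the two outer squares, the conditioning costs one factor $1/\r$, after which I apply the telescoping estimate $(f(\sigma)-f(\sigma^T))^2\le T\sum_{t=0}^{T-1}(f(\sigma^t)-f(\sigma^{t+1}))^2$ (Cauchy--Schwarz on $T$ terms). Because $\mu$ is stationary, the equilibrium average of each one-step increment equals $\sum_{\eta,\eta'}\mu(\eta)p(\eta,\eta')(f(\eta)-f(\eta'))^2=2\cE(f,f)$, so that $\sum_\sigma\mu(\sigma)\Espace{(f(\sigma)-f(\sigma^T))^2}{\sigma}\le 2T^2\cE(f,f)$. Combining with the factor $3$ from the triangle inequality and the two symmetric endpoints yields exactly the $\tfrac{6T^2}{\r}\cE(f,f)$ contribution.

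For the central square the double conditioning costs $1/\r^2$; after that, stationarity $\sum_\sigma\mu(\sigma)p^T(\sigma,\eta)=\mu(\eta)$ (and symmetrically in $\sigma'$) collapses the sums over $\sigma,\sigma'$, leaving the contribution
\[
\frac{1}{\r^2}\sum_{\eta,\eta'\in\O'}\mu(\eta)\mu(\eta')(f(\eta)-f(\eta'))^2.
\]
Here the canonical paths $\G_{\eta,\eta'}\subset\O'$ finally enter: expanding $f(\eta)-f(\eta')$ as a telescoping sum along $\G_{\eta,\eta'}$, applying Cauchy--Schwarz with weight $|\G_{\eta,\eta'}|$, and interchanging the order of summation so that each transition $(\gamma,\gamma')$ collects every path traversing it, the definition~\eqref{congr1} of $\cC(\O')$ yields the bound $2\cC(\O')\cE(f,f)$. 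Multiplying by the $\tfrac32$ from the triangle inequality produces the $\tfrac{3\cC(\O')}{\r^2}\cE(f,f)$ contribution, and summing all three pieces completes the Poincar\'e inequality.

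The only non-mechanical point, and the main obstacle if any, is the choice to average the intermediate variables against the conditional law $\nu_\sigma$ rather than, say, against the restriction of $\mu$ to~$\O'$. This is precisely what allows the outer sum over $\sigma$ to be absorbed by stationarity of the chain on the \emph{full} state space~$\O$, so that the right-hand side of the Poincar\'e inequality involves the genuine Dirichlet form $\cE(f,f)$ rather than the (unknown) Dirichlet form of some chain restricted to $\O'$; the price of this choice is paid entirely by the factors $1/\r$ and $1/\r^2$. Once this conditional coupling is in place, everything else reduces to Cauchy--Schwarz, stationarity, and the same canonical-paths bookkeeping already used in the proof of Theorem~\ref{thm:expmixing}.
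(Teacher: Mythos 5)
Your proposal is correct and follows essentially the same route as the paper's proof: the same three-way Cauchy--Schwarz decomposition with intermediate states drawn from $p^T(\sigma,\cdot)$ conditioned on $\Omega'$, the same telescoping/stationarity bound of $2T^2\cE(f,f)$ for each outer piece (the paper phrases this as bounding the stationary flow through a transition in $T$ steps by $T\mu(\tau)p(\tau,\tau')$, which is the same computation), and the same canonical-paths bound for the central piece, yielding identical constants.
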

\begin{proof}
   We run the Markov chain for $T$ steps.
   For $\sigma,\tau\in\Omega$, let 
   $\mu_\sigma(\tau)$ be the probability that, starting from $\sigma$, the Markov chain is at $\tau$ after $T$ steps. 
   Note that $\mu_\sigma(\Omega')\geq \rho$.
   For $\sigma,\tau\in\Omega$, and for any path~$\gamma$ of length~$T$ in the chain
   starting at~$\sigma$ and ending at~$\tau$,
   let $\nu_{\sigma,\tau}(\gamma)$ be the conditional probability that, 
given the initial state $\sigma$ at time $0$ and the final state $\tau$ after $T$ steps, the Markov chain traverses the path $\gamma$. 
   Then, for any function $f\colon\Omega\to\mathbb{R}$, we have 
   \begin{align*}
      &\var(f) = \frac{1}{2}\sum_{\sigma,\sigma'\in\Omega} \mu(\sigma)\mu(\sigma')(f(\sigma)-f(\sigma'))^2
      = \frac{1}{2}\sum_{\sigma,\sigma'\in\Omega} \sum_{\eta,\eta'\in\Omega'}\mu(\sigma)\mu(\sigma')\frac{\mu_\sigma(\eta)\mu_{\sigma'}(\eta')}{\mu_\sigma(\Omega')\mu_{\sigma'}(\Omega')}\times \\ 
       & \quad \quad\times\sum_{\gamma_1,\gamma_2}\nu_{\sigma,\eta}(\gamma_1)\nu_{\sigma',\eta'}(\gamma_2)
       \textstyle{ \Big(\sum_{e\in\gamma_1}\nabla_e f + \sum_{e\in\gamma_2}\nabla_e f + \sum_{e\in\Gamma_{\eta,\eta'}}\nabla_e f\Big)^2},
   \end{align*}
   where the three sums inside the parenthesis are over the edges of the paths $\gamma_1,\gamma_2,$ and $\Gamma_{\eta,\eta'}$, respectively.
   Then, applying Cauchy-Schwarz, we obtain
   \begin{align*}
& \var(f)
      \leq \frac{3}{2}\sum_{\sigma,\sigma'\in\Omega} \sum_{\eta,\eta'\in\Omega'}\mu(\sigma)\mu(\sigma')\frac{\mu_\sigma(\eta)\mu_{\sigma'}(\eta')}{\mu_\sigma(\Omega')\mu_{\sigma'}(\Omega')}\times \\ & \quad \quad\times
         \sum_{\gamma_1,\gamma_2}\nu_{\sigma,\eta}(\gamma_1)\nu_{\sigma',\eta'}(\gamma_2)       \textstyle{ \left (T\sum_{e\in\gamma_1}(\nabla_e f )^2 + T\sum_{e\in\gamma_2}(\nabla_e f )^2 + |\Gamma_{\eta,\eta'}|\sum_{e\in\Gamma_{\eta,\eta'}}(\nabla_e f )^2\right)}.
   \end{align*}
   We write the right-hand side above as $A_1+A_2+A_3$, where
   \begin{align*}
      A_1 & = \frac{3}{2}\sum_{\sigma,\sigma'\in\Omega} \sum_{\eta,\eta'\in\Omega'}\mu(\sigma)\mu(\sigma')\frac{\mu_\sigma(\eta)\mu_{\sigma'}(\eta')}{\mu_\sigma(\Omega')\mu_{\sigma'}(\Omega')}
         \sum_{\gamma_1,\gamma_2}\nu_{\sigma,\eta}(\gamma_1)\nu_{\sigma',\eta'}(\gamma_2)
         \,T\sum_{e\in\gamma_1}(\nabla_e f )^2\\
      A_2 & = \frac{3}{2}\sum_{\sigma,\sigma'\in\Omega} \sum_{\eta,\eta'\in\Omega'}\mu(\sigma)\mu(\sigma')\frac{\mu_\sigma(\eta)\mu_{\sigma'}(\eta')}{\mu_\sigma(\Omega')\mu_{\sigma'}(\Omega')}
         \sum_{\gamma_1,\gamma_2}\nu_{\sigma,\eta}(\gamma_1)\nu_{\sigma',\eta'}(\gamma_2)
         \,T\sum_{e\in\gamma_2}(\nabla_e f )^2\\
      A_3 & = \frac{3}{2}\sum_{\sigma,\sigma'\in\Omega} \sum_{\eta,\eta'\in\Omega'}\mu(\sigma)\mu(\sigma')\frac{\mu_\sigma(\eta)\mu_{\sigma'}(\eta')}{\mu_\sigma(\Omega')\mu_{\sigma'}(\Omega')}
         \sum_{\gamma_1,\gamma_2}\nu_{\sigma,\eta}(\gamma_1)\nu_{\sigma',\eta'}(\gamma_2)
         \,|\Gamma_{\eta,\eta'}|\sum_{e\in\Gamma_{\eta,\eta'}}(\nabla_e f )^2.
   \end{align*}
   We start with $A_1$. Summing over $\gamma_2,\sigma',\eta'$, and using $\sum_{\gamma_2}\nu_{\si',\eta'}(\gamma_2)=1$, we have
   $$
      A_1
      = \frac{3}{2}T \sum_{\sigma\in\Omega} \sum_{\eta\in\Omega'}\mu(\sigma)\frac{\mu_\sigma(\eta)}{\mu_\sigma(\Omega')}\sum_{\gamma_1}\nu_{\sigma,\eta}(\gamma_1)
        \sum_{e\in\gamma_1}(\nabla_e f )^2.
   $$
   Changing the order of the summations, and summing first over all pairs of adjacent states $\tau\sim\tau'$, we get
   \begin{align*}
      A_1
      &=\frac{3}{2}T\sum_{\tau,\tau'\in\Omega\colon\tau\sim\tau'}\mu(\tau)p(\tau,\tau')(\nabla_{\tau,\tau'}f  )^2
         \sum_{\sigma\in\Omega,\eta\in\Omega',\gamma \colon (\tau,\tau') \in \gamma}\frac{\mu(\sigma)\mu_{\sigma}(\eta)\nu_{\sigma,\eta}(\gamma)}{\mu_\sigma(\Omega') \mu(\tau)p(\tau,\tau')}\\
      &\leq \frac{3T}{2\rho}\sum_{\tau,\tau'\in\Omega\colon\tau\sim\tau'}\mu(\tau)p(\tau,\tau')(\nabla_{\tau,\tau'}f  )^2
         \sum_{\sigma\in\Omega,\eta\in\Omega',\gamma \colon (\tau,\tau') \in \gamma}\frac{\mu(\sigma)\mu_{\sigma}(\eta)\nu_{\sigma,\eta}(\gamma)}{\mu(\tau)p(\tau,\tau')}\\
      &\leq \frac{3T}{2\rho}\sum_{\tau,\tau'\in\Omega\colon\tau\sim\tau'}\mu(\tau)p(\tau,\tau')(\nabla_{\tau,\tau'}f  )^2
         \frac{\bbP_\mu\left({\text{Markov chain traverses $(\tau,\tau')$ within $T$ steps}}\right)}{\mu(\tau)p(\tau,\tau')}\\
      &\leq \frac{3T}{2\rho}\sum_{\tau,\tau'\in\Omega\colon\tau\sim\tau'}\mu(\tau)p(\tau,\tau')(\nabla_{\tau,\tau'}f  )^2
         \frac{T\mu(\tau)p(\tau,\tau')}{\mu(\tau)p(\tau,\tau')}
      = \frac{3T^2}{\rho}\cD(f,f),
   \end{align*}
   where $\bbP_\mu(\cdot)$ denotes the measure induced by the Markov chain started from stationarity, and we use the notation
   \begin{equation}\label{dirich}
      \cD(f,f) = \frac12\sum_{\tau,\tau'\in\Omega\colon\tau\sim\tau'}\mu(\tau)p(\tau,\tau')(\nabla_{\tau,\tau'}f  )^2
   \end{equation}
   for the so-called Dirichlet form. 
   For the second term, we have by symmetry that $A_2=A_1$. 
   For $A_3$, we use $\rho\leq \mu_\sigma(\Omega'),\mu_{\sigma'}(\Omega')$, and sum over $\gamma_1,\gamma_2,\sigma,\sigma'$ to obtain
   \begin{align*}
      A_3
      \leq\frac{3}{2\rho^2}\sum_{\eta,\eta'\in\Omega'}\mu(\eta)\mu(\eta')|\Gamma_{\eta,\eta'}|\sum_{e\in\Gamma_{\eta,\eta'}}(\nabla_e f )^2.
   \end{align*}
   Changing the order of summations, we get
   \begin{align*}
      A_3
      \leq \frac{3\,\cC(\O')}{\rho^2}\cD(f,f).
   \end{align*}
The result now follows since  $\trel$ is the smallest constant $\gamma$ such that the inequality 
   $$
   \var(f)\leq \gamma\,
   \cD(f,f)  $$
   holds for all functions $f:\O\mapsto\bbR$. 
\end{proof}

\begin{proof}[{\bf Proof of Theorem \ref{thm:polybound}}]
   Let $T=c_1n^2k$ for some large enough constant $c_1=c_1(\lambda)>0$.
   Thanks to Lemma \ref{lengthle} we may apply Lemma~\ref{lem:crosstail} 
   with $M=2n+k$. Thus, for any given $x\in\Lkn$ and ground-state edge $\bar\sigma_x$ with midpoint $x$, taking $\ell=c_2 \log |\Lkn|$ for some large enough constant $c_2=c_2(\lambda)>0$, 
   and taking the union bound over all $x\in\Lkn$ in \eqref{grcr} we obtain that the triangulation $\si^T$ at time $T$,  for an arbitrary initial condition $\si$, satisfies  
   \begin{equation}
      \pr{\bigcup\nolimits_{x\in\Lkn}\bigcup\nolimits_{y\in\Lkn}\big\{\sigma_y^T \cap \bar \sigma_x\neq\emptyset\big\}\cap \big\{|\sigma_y^T| > |\bar \sigma_x|+\ell\big\}}
      \leq n^{-1}.
      \label{eq:goodset}
   \end{equation}
   Let 
   $$
      \Omega'
      = \Big\{\sigma \colon \text{ for all $x,y\in\Lkn$,} \,
         |\sigma_x|\leq |\bar\sigma_x|+\ell
         \text{ and }\ind{\sigma_y\cap\bar\sigma_x\neq\emptyset} \leq \ind{|\sigma_y|\leq |\bar\sigma_x|+\ell}\Big\}.
   $$ 
   Thus~\eqref{eq:goodset} implies that $\pr{\sigma^T\in\Omega'}\geq 1-n^{-1}$.
   Note that $\Omega'$ is a decreasing set in the sense that if $\sigma \in \Omega'$ then for all $\sigma'$ that can be obtained from~$\sigma$ by 
   performing decreasing flips, we have $\sigma'\in\Omega'$.
   This allows us to construct a path $\Gamma_{\si,\si'}$ within $\Omega'$ between any pair of triangulations $\sigma,\sigma'\in\Omega'$.
   
   We now describe the path $\Gamma_{\si,\si'}$. 
   Fix two triangulations $\sigma,\sigma'\in\Omega'$, and any midpoint $x\in\Lkn$.
   Let $g$ be a ground state edge at~$x$.
   The edges that need to be flipped to transform $\sigma_x$ into 
   $\sigma_x'$ are contained in $I_{g}(\sigma,\ell)\cup I_{g}(\sigma',\ell)$ (recall the definition of $I_g$ from Lemma~\ref{lem:setI}).
   By Lemma~\ref{lem:setI} we have that all edges in $\bigcup_{\sigma\in\Omega'}I_{g}(\sigma,\ell)$ have midpoint inside a ball of radius $2\ell$
   centered at $x$. This implies that if we partition $[0,n]\times[0,k]$ into slabs of horizontal width $2\ell$, we can find a sequence of 
   flips that transform $\sigma$ into $\sigma'$ slab by slab, from left to right, so that when transforming the $i$th slab, only edges with midpoints
   in the $i$th and $(i+1)$th slabs need to be flipped. In each slab, we just perform the minimum number of flips needed to transform that slab into
   $\sigma'$, and we do that by first performing all decreasing flips and then all increasing flips. 
   
   Our goal is to apply
   Lemma~\ref{lem:canonicalburnin}, for which we need to bound  the value of the congestion 
   ratio $\cC(\O')$. To do this, consider a pair of adjacent triangulations $\eta,\eta'$.
   Assume that $\eta,\eta'$ differ at an edge of the $i$th slab. Therefore, if $\si,\si'$ are two triangulations for which the path between 
   them includes the transition $(\eta,\eta')$ we know that triangulation $\eta$ has slabs $1,2,\ldots,i-2$
    equal to $\si'$ and slabs $i+2,i+3,\ldots$ equal to 
   $\si$.
   Let $\xi$ be a partial triangulation in $\Omega'$ of the first $i-2$ slabs
   and $m$ be a partial triangulation in $\Omega'$ of the middle slabs so that $\xi$, $m$ and $\si$ are compatible, meaning
   that $\xi$, $m$ and the edges of $\si$ inside slabs $i+2,i+3,\ldots$ can coexist to form a full triangulation. 
   Similarly, let $\xi'$ be a partial triangulation in $\Omega'$ of the last slabs ($i+2,i+3,\ldots$)
   and $m'$ be a partial triangulation in $\Omega'$ of the middle slabs so that $\xi'$, $m'$ and the edges of $\si'$ inside 
   slabs $1,2,\ldots,i-2$ are compatible.  
   Assume that $p(\eta,\eta')\geq p(\eta',\eta)$, which implies that $p(\eta,\eta')\geq \frac{1}{2|\Lkn|}$ 
   (otherwise, replace $\mu(\eta)p(\eta,\eta')$ with $\mu(\eta')p(\eta',\eta)$ in $\cC(\O')$).
   Let $\eta_i$ be the part of $\eta$ inside slabs $i-1,i,i+1$.
   Then, summing over all $\xi,\xi',m,m'$ as above such that $(\eta,\eta')$ is a transition in the path from
   $\xi,m,\si$ to $\si',m',\xi'$, and noting that the path between $\si$ and $\si'$ has length at most $2\ell|\Lkn|$, 
   we obtain the following upper bound for $\cC(\O')$:
   $$
      \cC(\O') \leq 4\ell|\Lkn|^2\sum_{\xi,\xi',m,m'} \frac{\lambda^{|\xi|+|\xi'|+|m|+|m'|-|\eta_i|}}{Z_{\Omega'}},
   $$
   where $Z_{\Omega'}=\sum_{\sigma\in\Omega'}\lambda^{|\sigma|}$.
   Instead of summing over $m,m'$, we will sum over triangulations $m''$ of the middle slabs that are compatible with 
   both $\xi$ and $\xi'$ and are to be interpreted as $m \land m'$. 
   Given $m''$, we sum over $m,m'$ that can be obtained from $m''$ by increasing flips
   and such that $(\eta,\eta')$ is a transition in the path from 
   $\xi,m,\si$ to $\si',m',\xi'$.
   Let $A(m'',m,m',\eta)$ be the indicator that all four of them are compatible, as described above. 
   When $A(m'',m,m',\eta)=1$ we have that
   $|\eta_x|\leq |m_x|\lor |m'_x|$ for any midpoint $x$ in the middle slabs.
   Hence, $|m''|+|m\setminus m''|+|m'\setminus m''|\geq|\eta_i|$, which gives
$$   
      \cC(\O') 
        \leq 4\ell|\Lkn|^2 \sum_{\xi,\xi',m''} \frac{\lambda^{|\xi|+|\xi'|+|m''|}}{Z_{\Omega'}}
         \sum_{m,m'}\lambda^{|m''|+|m\setminus m''|+|m'\setminus m''|-|\eta_i|}A(m'',m,m',\eta).
$$
Since $\lambda<1$, we can simply use Anclin's bound \cite{Anclin:2003jb} saying that the number of 
triangulations of an $\ell\times k$ region with arbitrary constraint edges is at most $2^{3 k \ell}$ to obtain that 
   $$
      \cC(\O') \leq 4\ell|\Lkn|^2 2^{6k\ell} \sum_{\xi,\xi',m''} \frac{\lambda^{|\xi|+|\xi'|+|m''|}}{Z_{\Omega'}}
        \leq 4\ell|\Lkn|^2 2^{6k\ell}.
   $$
   Plugging everything into Lemma~\ref{lem:canonicalburnin} completes the proof.
\end{proof}

\section{Proof of Theorem \ref{mainth}}\label{sec:tightmixing}
\subsection{High-level overview}\label{sec:poverview}
The proof is composed of three main ingredients: (i) a good ensemble, (ii) a 
decay of correlation analysis, and (iii) a recursion for the logarithmic Sobolev inequality.

\emph{The good ensemble.}
The first step is to show that uniformly over the initial condition, with high probability, for all times $t\in[T,T+n^{2}]$, with $T=O(n^2)$, 
the Markov chain stays within 
a subset 
$\tilde\O$ of triangulations where all edges have length at most
$C \log n$ for some constant $C>0$. We will call this subset the \emph{good ensemble}. 
This result will be a consequence of  the tail estimate of Lemma~\ref{lem:tailedge}.
Therefore, we will couple our evolution in the time interval $t\in[T,T+n^{2}]$ with the Markov chain
restricted to the good ensemble, which evolves 
as before, by attempting to flip edges chosen uniformly at random, but with the suppression of any edge flip that would render an edge longer than $C\log n$. 
The structural properties of triangulations imply that this Markov chain is irreducible. 
Moreover, the reversible probability measure is given by $\tilde \mu=\mu(\cdot\mid \tilde\O)$, the measure $\mu$ conditioned on the event $\si\in\tilde \O$.  
Since $\mu$ and $\tilde \mu$ can be coupled with high probability, it is sufficient to analyze convergence to equilibrium for the restricted chain, 
and to show that the latter mixes in time $T'=O(n^2)$. We will actually prove that the restricted chain mixes in time $T'=n \polylog(n)$. 
For the rest of this discussion we assume that we are working with the Markov chain restricted to the good ensemble $\tilde\O$. 

\emph{Decay of correlations.} We split 
the set of midpoints $\Lkn$ into two intersecting slabs $\Lambda_\ml$ and $\Lambda_\mr$, where $\Lambda_\ml$ contains all
midpoints with horizontal coordinate smaller than $n/2+2C\log n$ and $\Lambda_\mr$ contains all midpoints with horizontal coordinate at least 
$n/2-2C\log n$. Note that $\Lambda_\ml\cap\Lambda_\mr$ is a slab of height $k$ and horizontal width $4C\log n$. 
Let $\mathcal{F}_\mr, \mathcal{F}_\ml$ be the $\sigma$-algebras generated by the edges with
midpoints in $\Lambda_\mr\setminus \Lambda_\ml, \Lambda_\ml\setminus \Lambda_\mr$ 
respectively.
We want to show that, conditional on any event $F\in\cF_\mr$, the distribution of  
the edges in 
$\Lambda_\ml\setminus \Lambda_\mr$ is not affected much, and similarly for events $F\in\cF_\ml$. The intuition for this is that the intersection $\Lambda_\ml \cap \Lambda_\mr$ of the slabs 
is large enough to allow correlations from $\Lambda_\ml\setminus \Lambda_\mr$ to decay. 
We will make this intuition rigorous by showing that 
there exists a positive $\epsilon=\epsilon(\l)$ such that, for all $\cF_\ml$-measurable functions $f_\ml$ and  all $\cF_\mr$-measurable functions
$f_\mr$, we have
\begin{equation}
   \sup_{F\in\mathcal{F}_\mr}\big|\tilde\mu(f_\ml \mid F) -\tilde \mu(f_\ml)\big| \leq n^{-\epsilon} \|f_\ml\|_1
   \quad\text{and}\quad
   \sup_{F\in\mathcal{F}_\ml}\big|\tilde\mu(f_\mr \mid F) - \tilde\mu(f_\mr)\big| \leq n^{-\epsilon} \|f_\mr\|_1,
   \label{eq:decayhighlevel}
\end{equation}
where $\tilde \mu(f\mid F)$ stands for the expectation of $f$ given the event $F$ and we use $\|f\|_1$ to denote the $L^1$ norm $\|f\|_1=\sum_{\si\in\tilde\O}\tilde\mu(\si)|f(\si)|$. 

The high-level argument for~\eqref{eq:decayhighlevel} is the following.
Fix any valid collection of edges with midpoints in $\Lambda_\ml\setminus \Lambda_\mr$, that is, a partial triangulation from $\tilde\O$. This defines an event $F\in\cF_\ml$.
We will construct a coupling of one triangulation $\sigma$ distributed according to $\tilde\mu(\cdot \mid F)$ and another triangulation $\sigma'$ distributed 
according to $\tilde \mu(\cdot)$. We do this by first sampling the edges of $\sigma'$ whose midpoint is in $\Lambda_\ml\setminus \Lambda_\mr$. 
Call this event $F'\in\cF_\ml$. Since we are restricted to the good ensemble, the edges of $F$ and $F'$
have length at most $C\log n$. Therefore, 
none of them crosses into the right half of $\Lambda_\ml\cap\Lambda_\mr$. 
Lemma~\ref{lem:toptobottom} therefore ensures that we may couple the sampling of
edges in $\Lambda_\mr$ so that, with probability at least $1-e^{-\epsilon \log n}$, we put the same top-to-bottom crossing of unit verticals
in $\sigma$ and $\sigma'$ inside 
the right half of $\Lambda_\ml\cap\Lambda_\mr$. In particular, this implies that we can couple $\sigma$ and $\sigma'$ so that 
they agree on $\Lambda_\mr\setminus\Lambda_\ml$. This will establish~\eqref{eq:decayhighlevel}.

\emph{The log-Sobolev inequality.} 
An important ingredient in the proof of Theorem \ref{mainth} is the use of the logarithmic Sobolev inequality for the good ensemble.   
For any positive function $f$, 
let $\tilde \mu(f)$ stand for the expectation of $f$ in the good ensemble, and let 
$$
   \ent(f)
   =\tilde \mu\left(f\log \left(\tfrac{f}{\tilde\mu(f)}\right)\right)
   =\sum_{\sigma} \tilde \mu(\sigma)f(\sigma)\log \left(\tfrac{f(\sigma)}{\tilde\mu(f)}\right)
$$ 
denote the entropy of $f$. 
Also, define 
$$
   \mathcal{E}(f,f)=\frac{1}{2}\sum_{\sigma,\sigma'\in\tilde\O}\tilde\mu(\sigma)\r(\si,\si')(f(\sigma)-f(\sigma'))^2,
$$
where 
$$
\r(\si,\si') = \frac{\l^{|\si'|}}{\l^{|\si|}+\l^{|\si'|}}\ind{\si\sim\si'}\,.
$$
As usual $\sigma\sim\sigma'$ means that $\si,\si'$ differ by a single edge flip.
Note that $\r(\si,\si')= |\Lkn|p(\si,\si')$, where $p$ is the transition matrix of the discrete time chain. 
Thus $\cE(f,f)$ can be interpreted as the Dirichlet form of the continuous time Markov chain where 
every edge of the triangulation independently attempts to flip at rate 1.

Let $\sobolev$ be the {\em log-Sobolev constant} of this Markov chain, defined as the smallest constant  $c>0$  such that for all 
functions $f$ one has
\begin{equation}
   \ent(f^2) \leq c\,\mathcal{E}(f,f).
   \label{eq:sobolev}
\end{equation}
It is known (see e.g.\ \cite[Theorem~2.9]{Fabio}) that $\sobolev$ is related to the mixing and relaxation times via 
\begin{equation}\label{sob_mix}
   \tilde T_\mix \leq \frac{\sobolev}{4}\left(4+ \log_+\log\tilde\mu_*^{-1}\right)
   \quad\text{and}\quad
   2\,\tilde  T_\rel \leq \sobolev \leq \tilde T_\rel \left(\frac{\log(\tilde\mu_*^{-1})}{1-2\tilde\mu_*}\right),
\end{equation}
where $\tilde\mu_*=\min_{\sigma\in\tilde \Omega} \tilde\mu(\sigma)$, and we use $\tilde  T_\rel, \tilde
          T_\mix$ to denote the relaxation time and the mixing time of the continuous time chain restricted to the good set. 
These bounds should be compared with \eqref{trelmustart}. In particular, it will be crucial for us to work with the log-Sobolev constant rather than the relaxation time in order to obtain the strong bound on mixing time claimed in Theorem~\ref{mainth}.

\emph{Recursion.} We will bound the (restricted) log-Sobolev constant via the so-called bisection method introduced in \cite{Martinelli:1999jm}. 
Let $\Lambda_\ml,\Lambda_\mr$  and $\mathcal{F}_\ml,\mathcal{F}_\mr$ be as above. 
Using the decay of correlations in \eqref{eq:decayhighlevel}, the decomposition estimate in \cite[Proposition 2.1]{Cesi} implies that for all functions $f:\tilde\O\mapsto\bbR$ we have
\begin{equation}
   \ent(f^2)
   \leq \left(1+O(n^{-\epsilon})\right) \tilde\mu\left[\ent(f^2\mid \mathcal{F}_\ml) + \ent(f^2\mid \mathcal{F}_\mr)\right]
   \leq \left(1+O(n^{-\epsilon})\right) 2\,\sobolev^{(1)} \mathcal{E}(f,f),
   \label{eq:overview1}
\end{equation}
where $\sobolev^{(1)}$ is the largest log-Sobolev constant among the systems conditioned on $\mathcal{F}_\ml$ and $\mathcal{F}_\mr$ and the factor $2$ comes from the double counting of flips within the region $\L_\ml\cap\L_\mr$.
Hence, we obtain that $\sobolev \leq \left(1+O(n^{-\epsilon})\right)2\sobolev^{(1)}$.
We would then like to recursively apply the same strategy to bound $\ent(f^2\mid \mathcal{F}_\ml)$ and $\ent(f^2\mid \mathcal{F}_\mr)$. 
Indeed, $\tilde\mu(\cdot\mid \cF_\mr)$ is a Gibbs measure on triangulations with midpoints in $\L_\ml$, and  
we may split $\Lambda_\ml$ into two intersecting slabs, establish decay of correlations and again use the decomposition above to further reduce the original scale. 
One caveat is that now we have to take into account the boundary conditions dictated by the conditioning on $\cF_\mr$. 
These consist of constraint edges protruding from the right boundary, with midpoints in $\L_\mr\setminus\L_\ml$. 
The boundary conditions will not be a major problem since we are in the good ensemble so these edges cannot protrude more than a distance
$C\log n$.
After $j$ such iterations, we will be considering slabs of size roughly $n2^{-j}$, with edges of size at most $C\log n$ protruding from both the left and right boundaries. It will be convenient to iterate this procedure for 
$j=j_*$ steps, where  $n2^{-{j_*}}$ is roughly $\log^6 n$, so that protruding boundary edges are still far away from the middle of the slab, which is 
the crucial region for exploiting the decay of correlations. With this strategy, after $j_*$ iterations we obtain
$$
   \sobolev 
   \leq 
   \left(1+O(n^{-\epsilon})\right)^{j_*}2^{j_*}\sobolev^{(j_*)}.
$$
Employing the general polynomial bound on the 
relaxation time of Theorem~\ref{thm:polybound} and the relation between $\sobolev$ and $T_\rel$, 
we obtain that $\sobolev^{(j_*)}$ is at most $\polylog(n)$ uniformly over all boundary conditions in the good ensemble.
The main problem is that the term $2^{j_*}$ is 
too large (of order $\frac{n}{\log^6n}$ by our choice of $j_*$).
As in \cite{Martinelli:1999jm} we overcome this difficulty by {\it randomizing\/} the location of the split of $\Lkn$ into $\Lambda_\ml$ and $\Lambda_\mr$, and similarly for the other scales. The idea is to 
first split $\Lkn$ into three disjoint slabs with height $k$, the left and right slabs with horizontal length $\tfrac12(n-\log^3n)$, and the middle slab with horizontal  length $\log^3n$. 
Then we further split the middle slab into smaller slabs (that we call \emph{rectangles}) each with horizontal length $4C\log n$. 
We choose one such rectangle uniformly at random, and define $\Lambda_\ml$ to be the midpoints 
to the left of this rectangle (including the rectangle)
and $\Lambda_\mr$ to be the midpoints to the right of this rectangle (including the rectangle). With this randomization,~\eqref{eq:overview1} will be improved to
$$
   \ent(f^2)
   \leq \left(1+O(1/\log ^2 n)\right) \sobolev^{(1)} \mathcal{E}(f,f),
$$
where $\log ^2 n$ is roughly the number of rectangles in the middle slab of $\Lkn$.
Then, iterating $j_*$ times (with $j_*$ as above) we get
\begin{equation}\label{polylog}
   \sobolev
   \leq \left(1+O(j_*/\log^2n)\right) \sobolev^{(j_*)} = \polylog(n).
\end{equation}
Once we obtain \eqref{polylog}, using \eqref{sob_mix} we can conclude 
that the continuous time Markov chain restricted to the good ensemble satisfies $\tilde T_\mix = \polylog(n)$. From this the desired conclusion for the discrete time Glauber dynamics
will follow in a simple way. 

We now proceed with the detailed proof of Theorem \ref{mainth}. 
\subsection{The good ensemble}\label{sec:goodensemble}
Let $\sigma^0,\sigma^1,\ldots$ be the discrete time Markov chain on triangulations of $\Lokn$ with no constraint edges.
The first step is to show that after a burn-in time of order $n^2$, during a very long time interval, 
the largest edge of the triangulation is of order at most $\log n$. 
Let $C=C(\lambda)$ be a large enough constant, and define
\begin{equation}
   \tilde\Omega = \Big\{\sigma\in\Omega \colon |\sigma_x|\leq C\log n \text{ for all $x\in\Lkn$}\Big\}.
   \label{eq:goodensemble}
\end{equation}
The set $\tilde\Omega$ represents the good ensemble.
The next lemma will allow us to analyze the Markov chain restricted to the set $\tilde\Omega$.
\begin{lemma}\label{lem:burnin}
   Fix $\l\in(0,1)$. There exists a constant $c_1=c_1(\lambda)$ so that if we set $T=c_1 n^2$ then for all $n\geq k\geq 1$
   $$
      \pr{\bigcap\nolimits_{t=T}^{T+n^{2}} \big\{\sigma^t\in \tilde\Omega\big\}} \geq 1 - n^{-2}.
   $$
\end{lemma}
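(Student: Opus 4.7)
The argument is a routine application of the single-edge tail bound in Lemma~\ref{lem:tailedge} combined with two union bounds: one over midpoints $x\in\Lkn$ and one over times $t\in[T,T+n^{2}]$. The key observation is that Lemma~\ref{lem:tailedge} applies at every single time $t\geq c_1 n^2$, with the same constants, for every midpoint $x$, and for every initial triangulation; there is no need to track correlations across times or across edges.

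Concretely, I would fix $T = c_1 n^2$, where $c_1$ is the constant from Lemma~\ref{lem:tailedge}, so that the hypothesis $t\geq c_1 n^2$ is satisfied for every $t$ in the relevant interval. I would then choose $C=C(\lambda)$ in the definition \eqref{eq:goodensemble} of the good ensemble $\tilde\Omega$ large enough so that $c_1\exp(-c_2 C\log n) \leq n^{-c_2 C/2}$ with $c_2 C$ large enough for the final estimate. For a single $t\in[T,T+n^2]$ and a single midpoint $x\in\Lkn$, Lemma~\ref{lem:tailedge} applied with $\ell = C\log n$ gives
\begin{equation*}
   \pr{|\sigma_x^t|\geq C\log n} \leq c_1\exp(-c_2 C\log n) = c_1 n^{-c_2 C}.
\end{equation*}
Summing this bound over all $x\in\Lkn$ (there are at most $O(nk)=O(n^2)$ of them since $k\leq n$) yields $\pr{\sigma^t\notin \tilde\Omega} \leq O(n^{2-c_2 C})$.

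Finally, I would apply a union bound over the $n^2+1$ values of $t\in[T,T+n^2]$ to obtain
\begin{equation*}
   \pr{\bigcup\nolimits_{t=T}^{T+n^2}\{\sigma^t\notin\tilde\Omega\}}
   \leq (n^2+1)\cdot O(n^{2-c_2 C})
   \leq n^{-2},
\end{equation*}
provided $C$ is chosen so that $c_2 C\geq 6$ (any larger choice is harmless). Taking complements gives the claimed lower bound of $1-n^{-2}$, uniformly over the initial state $\sigma^0\in\Omega$ since the tail bound in Lemma~\ref{lem:tailedge} is itself uniform over the starting triangulation.

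I do not expect any genuine obstacle in this argument; the only point to check carefully is the accounting of constants, namely fixing $c_1$ (from Lemma~\ref{lem:tailedge}) first and then choosing the constant $C$ in the definition of $\tilde\Omega$ sufficiently large so that the two union bounds together produce the stated $n^{-2}$ error. In particular, the length of the time window $n^2$ here is essentially arbitrary; the same proof would give the same conclusion for any polynomially long window, at the cost of a slightly larger constant $C$.
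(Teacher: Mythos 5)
Your argument is correct and coincides with the paper's own proof: both fix $T=c_1n^2$ so that Lemma~\ref{lem:tailedge} applies with $\ell=C\log n$ at every $t$ in the window, then take a union bound over the $O(nk)$ midpoints and the $n^2+1$ times, choosing $C$ large enough to absorb the polynomial factors. No gaps.
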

\begin{proof}
   For any given $x\in\Lkn$ and any $t\geq c_1n^2$, Lemma~\ref{lem:tailedge} gives that
   $$
      \pr{|\sigma_x^t| > C\log n} 
      \leq \exp(-c_2 C \log n),
   $$
   for some constant $c_2$ independent of $C$ and $n$.
   Setting $C$ large enough and taking a union bound over all $x\in\Lkn$ and all integers $t\in[T,T+n^{2}]$ concludes the proof.
\end{proof}


\subsection{Decay of correlations}\label{sec:decay}
Let $\Gamma\subset\Lambda$ be a {\it slab\/} of width $w$; that is, for some $x\in\mathbb{Z}$,
$$
   \Gamma = \Lkn \cap [x,x+w]\times[0,k].
$$
We assume throughout that $w\geq \tfrac12\,C^6\log^6 n$, where $C$ is fixed as in \eqref{eq:goodensemble}.

Partition $\Gamma$ into three slabs, two of width roughly $\tfrac12(w-C^3\log^3n)$ and one of width roughly $C^3\log^3n$. More precisely, for $\Gamma$ as above, let
$$
   \Gamma_1 = \Lkn \cap \big[x,x+\tfrac{w-C^3\log^3n}{2}\big]\times[0,k],
   \quad
   \Gamma_2 = \Lkn \cap \big(x+\tfrac{w-C^3\log^3n}{2},x+\tfrac{w+C^3\log^3n}{2}\big]\times[0,k]
$$
$$
   \text{and}\quad
   \Gamma_3 = \Lkn\cap \big(x+\tfrac{w+C^3\log^3n}{2},x+w\big]\times[0,k].
$$
Partition the middle slab $\Gamma_2$ into disjoint slabs $J_1,J_2,\ldots,J_s$ (from left to right) each of width $4C\log n$, with  
\begin{equation}
   s=\frac{C^3\log^3n}{4C\log n}=\frac{C^2\log^2n}{4}.
   \label{eq:s}
\end{equation}
Let $\iota$ be an integer chosen uniformly at random from $\big\{1,2,\ldots,s\big\}$.
Finally, define 
\begin{equation}
   \Gamma_\ml = \Gamma_1 \cup J_1 \cup J_2 \cup \cdots \cup J_\iota
   \quad\text{and}\quad
   \Gamma_\mr = \Gamma_3 \cup J_\iota \cup J_{\iota+1} \cup \cdots \cup J_s.
   \label{eq:gammalr}
\end{equation}
Then, $\Gamma_\ml$ represents the left portion of $\Gamma$, 
$\Gamma_\mr$ represents the right portion of $\Gamma$, and 
$\Gamma_\ml\cap\Gamma_\mr=J_\iota$.

We need to introduce some more notation to be precise about boundary conditions. For any $\si\in\tilde\O$, $A\subset \Lkn$, if $\si=\{\si_x,\;x\in\Lkn\}$ then we write $\si_A$ for the set of edges $\{\si_x\,,\;x\in A\}$.  
If $\xi=\si_A$ for some $\si\in\tilde\O$ and $A\subset\Lkn$ we say that $\si$ {\it contains\/} $\xi$ and we call $\xi$ a {\it partial triangulation\/} in $\tilde\O$. If $A\cap A'=\emptyset$ and $\xi=\si_A$, $\xi'=\si_{A'}$ for some $\si\in\tilde\O$, then we define $\xi\cup\xi'=\si_{A\cup A'}$. 

We use partial triangulations $\xi$ in $\tilde\O$ as boundary conditions for a region $B\subset\Gamma$. Fix a partial triangulation $\xi$. We denote by $A_\xi\subset\Lkn$ the set of midpoints of the edges in $\xi$. Let $\tilde\O^\xi$ denote the set of full triangulations $\si\in\tilde\O$ that contain $\xi$. We define 
for any $B\subset \Gamma$, and any $\xi$ such that $A_\xi\subset \Lkn\setminus B$,
\begin{equation}
   \tilde\Omega_B^\xi
   = \{\si_B:\;\sigma\in\tilde\Omega^\xi\}.
   \label{eq:ensemble}
\end{equation}
For any $\eta_B\in \tilde\Omega_B^\xi$, let 
$$
   \mu_B^\xi(\eta_B)=\frac{\sum_{\si\in\tilde\O^\xi:\; \si_B=\eta_B}\tilde\mu(\sigma)}{\tilde\mu(\tilde\Omega^\xi)}\,,
$$ 
be the induced probability measure over $\tilde\Omega_B^\xi$. In words, $ \mu_B^\xi$ is the marginal distribution over midpoints $B$ when we impose a boundary condition $\xi$. If $\xi$ is empty (no boundary condition) we simply write $\tilde\Omega_B$ and $\mu_B$.
\begin{lemma}\label{lem:decay}
   There exists a positive constant $c=c(\l,k)$ such that for any partial triangulation $\xi $ with $A_\xi\subset\Lkn\setminus\Gamma$,
   for all functions $f_\ml,f_\mr:\tilde\O\mapsto \bbR$ such that $f_\ml$ depends only on edges with midpoint in $\Gamma_\ml\setminus J_\iota$ and $f_\mr$ depends only on edges with midpoint in $\Gamma_\mr\setminus J_\iota$,  
   and for any $\sigma_\ml \in \tilde\Omega_{\Gamma_\ml\setminus J_\iota}^\xi$ and $\sigma_\mr\in\tilde\Omega_{\Gamma_\mr\setminus J_\iota}^\xi$,
   we have 
   $$
      \big|\mu_{\Gamma_\ml\setminus J_\iota}^{\xi\cup\sigma_\mr}(f_\ml)-\mu_{\Gamma_\ml\setminus J_\iota}^{\xi}(f_\ml)\big|
      \leq  \mu_{\Gamma_\ml\setminus J_\iota}^{\xi}(|f_\ml|) \exp(-c \log n)
   $$
   and
   $$
      \big|\mu_{\Gamma_\mr\setminus J_\iota}^{\xi\cup\sigma_\ml}(f_\mr)-\mu_{\Gamma_\mr\setminus J_\iota}^{\xi}(f_\mr)\big|
      \leq \mu_{\Gamma_\mr\setminus J_\iota}^{\xi}(|f_\mr|)   \exp(-c\log n).
   $$
\end{lemma}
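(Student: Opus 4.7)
The plan is to exploit the \emph{separator} property of top-to-bottom crossings of unit verticals in the interior of the overlap slab $J_\iota$: any such crossing acts as a vertical ``wall'' no other edge can cross, so the Gibbs measure factorizes into two independent Gibbs pieces on either side. By the manifest left–right symmetry of the setup it suffices to establish the first inequality.

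\textbf{Geometric setup.} Let $R\subset J_\iota$ be the central $(2C\log n)\times k$ sub-rectangle of $J_\iota$, placed at horizontal distance at least $C\log n$ from either lateral side of $J_\iota$. Since every configuration in $\tilde\Omega$ has all edges of $\ell_1$-length at most $C\log n$, no edge of $\xi$, of $\sigma_\mr$, or of any partial triangulation on $\Gamma_\ml\setminus J_\iota$ can reach $R$. Hence Lemma~\ref{lem:toptobottom} with $m=2C\log n$ is directly applicable to $R$ under both $\mu^\xi$ and $\mu^{\xi\cup\sigma_\mr}$ (and $2C\log n\ge m_0$ for $n$ large).

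\textbf{Decomposition via the leftmost separator.} Let $V^{\ast}(\sigma)$ denote the leftmost top-to-bottom crossing of unit verticals in $R$ contained in $\sigma$, with $V^{\ast}=\emptyset$ if none exists. Conditional on $\{V^{\ast}=V\}$, the Gibbs measure splits into independent Gibbs pieces on the region strictly to the left and strictly to the right of $V$; since $\sigma_\mr$ is supported strictly to the right of $R$ and $\Gamma_\ml\setminus J_\iota$ strictly to its left, the conditional law $P_V$ of $\sigma_{\Gamma_\ml\setminus J_\iota}$ given $\{V^{\ast}=V\}$ is the \emph{same} under $\mu^\xi$ and $\mu^{\xi\cup\sigma_\mr}$. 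Writing $\pi(V)=\mu^\xi(V^{\ast}=V)$ and $\pi'(V)=\mu^{\xi\cup\sigma_\mr}(V^{\ast}=V)$, one obtains the representations
\[
 \mu^{\xi}_{\Gamma_\ml\setminus J_\iota}(f_\ml)=\sum_V\pi(V)P_V(f_\ml)+\pi(\emptyset)P_\emptyset^{\xi}(f_\ml),
\]
and analogously for $\mu^{\xi\cup\sigma_\mr}$ with $\pi$ replaced by $\pi'$. Part (i) of Lemma~\ref{lem:toptobottom} forces $\pi(\emptyset),\pi'(\emptyset)\le n^{-\Omega(C)}$, making the no-crossing remainders negligible. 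Part (ii) of the same lemma provides a coupling of the two measures whose crossings inside $R$ coincide with probability at least $1-n^{-\Omega(C)}$; after identifying leftmost crossings on the coupled event one obtains $\sum_V|\pi(V)-\pi'(V)|\le n^{-\Omega(C)}$.

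\textbf{Upgrading to the pointwise ratio bound — the main obstacle.} Substituting the above representations into the difference already yields a total-variation style estimate of the form $|\mu^{\xi\cup\sigma_\mr}(f_\ml)-\mu^\xi(f_\ml)|\lesssim n^{-\Omega(C)}\|f_\ml\|_\infty$. The lemma, however, demands the stronger multiplicative form $\mu^\xi(|f_\ml|)\exp(-c\log n)$, equivalent to the pointwise Radon–Nikodym bound $\|d\mu^{\xi\cup\sigma_\mr}_{\Gamma_\ml\setminus J_\iota}/d\mu^\xi_{\Gamma_\ml\setminus J_\iota}-1\|_\infty\le e^{-c\log n}$. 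Writing the ratio as
\[
\frac{\mu^{\xi\cup\sigma_\mr}_{\Gamma_\ml\setminus J_\iota}(\eta)}{\mu^\xi_{\Gamma_\ml\setminus J_\iota}(\eta)}=\frac{\mu^{\xi\cup\eta}_{\Gamma_\mr\setminus J_\iota}(\sigma_\mr)}{\mu^\xi_{\Gamma_\mr\setminus J_\iota}(\sigma_\mr)},
\]
this reduces to controlling the multiplicative effect of the left perturbation $\eta$ on the right marginal of $\sigma_\mr$ uniformly in $\eta$. The natural approach is an iterated coupling: apply Lemma~\ref{lem:toptobottom} to a \emph{second} clean sub-rectangle $R''\subset J_\iota$ placed between $\Gamma_\ml\setminus J_\iota$ and $R$ to completely decouple $\eta$ from the configuration on and to the right of $R''$ — and in particular from the configuration in $R$ (and hence from $\sigma_\mr$). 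The generous scale separation $w\ge\tfrac12 C^6\log^6 n\gg C\log n$ leaves ample room for both $R$ and $R''$ to be placed inside $J_\iota$ with all required buffers. Making this two-scale coupling argument yield the \emph{pointwise} ratio bound, rather than just a TV estimate, is the delicate technical step of the proof.
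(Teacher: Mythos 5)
Your core idea --- using top-to-bottom crossings of unit verticals inside $J_\iota$ as separators and invoking Lemma~\ref{lem:toptobottom} to couple the measures with and without the conditioning on $\sigma_\mr$ --- is exactly the strategy of the paper's proof, and you correctly diagnose that the entire content of the lemma is the multiplicative (pointwise Radon--Nikodym) form of the estimate rather than a total-variation bound against $\|f_\ml\|_\infty$. However, you then leave precisely that step unproven: your last paragraph explicitly defers ``making the two-scale coupling yield the pointwise ratio bound'' as ``the delicate technical step,'' and the device you propose for it (a second buffer rectangle $R''$) is neither carried out nor in fact needed. As written, the proposal establishes only the weaker sup-norm estimate, so there is a genuine gap at the decisive point.

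The paper closes this gap with a single coupling and no second rectangle. One couples $\eta\sim\mu^{\xi\cup\sigma_\mr}_{\Gamma_\ml}$ and $\eta'\sim\mu^{\xi}_{\Gamma_\ml}$ by revealing vertical strips of $J_\iota$ from right to left via Lemma~\ref{lem:toptobottom}, and on the success event $I_{\eta,\eta'}$ (a common crossing of unit verticals is found in $J_\iota$) the configurations on $\Gamma_\ml\setminus J_\iota$ are sampled \emph{identically}. Then for each fixed $\tau$,
$$ \bbP(\eta'_\ml=\tau)=\bbP(\eta_\ml=\tau,\,\eta'_\ml=\tau)+\bbP(\eta_\ml\neq\tau,\,\eta'_\ml=\tau)\;\le\; \bbP(\eta_\ml=\tau)+\bbP(\eta'_\ml=\tau)\,\bbP\big(I_{\eta,\eta'}^{\compl}\,\big|\,\eta'_\ml=\tau\big), $$
and symmetrically with $\eta$ and $\eta'$ exchanged. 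The crucial point is that the conditional failure probability $\bbP(I_{\eta,\eta'}^{\compl}\mid \eta'_\ml=\tau)$ is still $\exp(-\Omega(C\log n))$, because the crossing estimates of Lemma~\ref{lem:toptobottom} are uniform over all constraint edges not meeting the relevant rectangle and therefore survive conditioning on the configuration outside $J_\iota$. Rearranging yields the pointwise bound $\big|\mu^{\xi\cup\sigma_\mr}_{\Gamma_\ml\setminus J_\iota}(\tau)/\mu^{\xi}_{\Gamma_\ml\setminus J_\iota}(\tau)-1\big|\le e^{-c\log n}$ for every $\tau$, which is exactly what you reduced the lemma to; summing against $|f_\ml|$ finishes. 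Note also that retreating to your $V^{\ast}$-decomposition does not circumvent the difficulty: to get the $L^1$ form you would need $|\pi(V)-\pi'(V)|\le \pi(V)e^{-c\log n}$ pointwise in $V$, not merely $\sum_V|\pi(V)-\pi'(V)|$ small, which is the same ratio problem one level down.
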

\begin{proof}
   We will establish only the first estimate; the second follows by a symmetrical argument.
   Since $f_\ml$ depends only on edges with midpoint in $\Gamma_\ml\setminus J_\iota$, it is enough to show that, 
   for any $\sigma_\mr\in\tilde\Omega_{\Gamma_\mr\setminus J_\iota}^\xi$ 
   and any $\tau\in\tilde\Omega^{\xi\cup\sigma_\mr}_{\Gamma_\ml\setminus J_\iota}$, 
   we have 
   \begin{equation}\label{cice}
      \left|\frac{\mu_{\Gamma_\ml\setminus J_\iota}^{\xi\cup\sigma_\mr}(\tau)}{\mu_{\Gamma_\ml\setminus J_\iota}^\xi(\tau)}-1\right|\leq \exp(-c_2C\log n),
   \end{equation}
   for some positive $c_2=c_2(\l,k)$, where $C$ is the constant in the definition of the width of $J_\iota$. 
   
   Let $\eta$ and $\eta'$ be random triangulations distributed as $\mu_{\Gamma_\ml}^{\xi\cup\sigma_\mr}$ and 
   $\mu_{\Gamma_\ml}^{\xi}$, respectively. 
   Let $\mathbb{P}$ denote the following coupling between $\eta$ and $\eta'$; refer to Figure~\ref{fig:coupling}. 
   The idea is to sample recursively edges from the pair $(\eta,\eta')$ in vertical strips inside $J_\iota$ from right to left
   from a suitable coupling of $\mu_{J_\iota}^{\xi}$ and $\mu_{J_\iota}^{\xi\cup\si_\mr}$. Here we will use the estimate of Lemma~\ref{lem:toptobottom} to ensure that, with large probability, there is a common top-to-bottom crossing of unit verticals within $J_\iota$. On this event we can safely resample $(\eta_{\Gamma_\ml\setminus J_\iota},\eta'_{\Gamma_\ml\setminus J_\iota})$ in such a way that $ \eta_{\Gamma_\ml\setminus J_\iota}=\eta'_{\Gamma_\ml\setminus J_\iota}=\t$. 
   
   We now present the details.
   Consider the midpoints of $\Gamma$ in order of their horizontal coordinate, from largest to smallest 
   (i.e., from right to left in Figure~\ref{fig:coupling}). 
   Let $v_0$ be the leftmost integer horizontal coordinate of points in $\Gamma_\mr\setminus J_\iota$, and let $V_0=\xi\cup \sigma_\mr$ and $V_0'=\xi$.
   Now for $i\geq1$, define $v_i,V_i,V_i'$ inductively as follows.
   Let $v_i< v_{i-1}$ be the rightmost integer horizontal coordinate 
   that is not crossed by an edge of $V_0 \cup V_1 \cup V_1' \cup V_2 \cup V_2' \cup \cdots \cup V_{i-1} \cup V_{i-1}'$. 
   Using the coupling from Lemma~\ref{lem:toptobottom}, sample all edges of $\eta$ and $\eta'$ whose midpoints have horizontal coordinate $v_{i}$, and denote them by $V_{i}$ and $V_{i}'$, respectively.  
   There are two cases.
   In the first case, at least one edge of $V_{i}$ or $V_{i}'$ is not a unit vertical (as happens with $i=1,2$ and $3$ in Figure~\ref{fig:coupling}).
   In this case, continue by defining $v_{i+1}$ as described above.
   If $v_{i+1}$ is a horizontal coordinate in $J_\iota$, sample $V_{i+1}$ and $V_{i+1}'$ as described above and iterate.
   Otherwise, if $v_{i+1}$ is not in $J_\iota$, stop this procedure and sample the remaining edges of $\eta$ and $\eta'$ independently.
   In the second case, all edges in $V_{i}$ and $V_{i}'$ are unit verticals (i.e., they create a top-to-bottom crossing of $\Gamma$, 
   as in Figure~\ref{fig:coupling} for $i=4$).
   Then stop the procedure above and sample the edges with horizontal coordinate smaller than $v_i$ identically in both $\eta$ and $\eta'$
   (as depicted by the gray edges in Figure~\ref{fig:coupling}), and then
   sample the remaining edges (that necessarily have midpoints in $\Gamma_\mr$) independently in $\eta$ and $\eta'$.
   Let $I_{\eta,\eta'}$ be the event that $\eta$ and $\eta'$ have a common top-to-bottom crossing of unit verticals with midpoint in~$J_\iota$.
   \begin{figure}[htbp]
      \begin{center}
         \includegraphics[scale=.9]{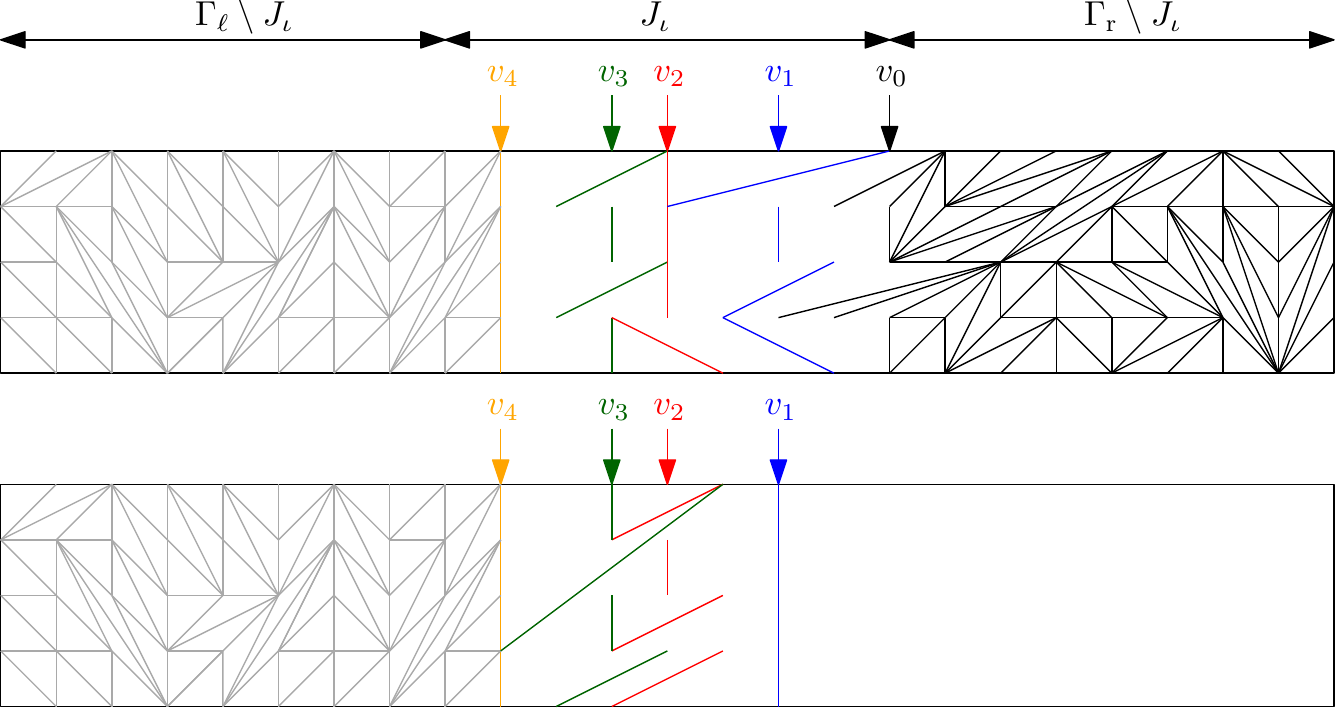}
      \end{center}\vspace{-.5cm}
      \caption{Coupling between $\mu_{\Gamma_\ml}^{\xi\cup\sigma_\mr}$ (above) and $\mu_{\Gamma_\ml}^{\xi}$ (below).
         Note that the figure is not to scale: in reality, the middle region $J_\iota$ is much smaller than the two outer regions.}
      \label{fig:coupling}
   \end{figure}

   Let $\eta_\ml,\eta'_\ml$ be the edges of $\eta,\eta'$ with midpoints in $\Gamma_\ml\setminus J_\iota$, 
   and let $\eta_\mr,\eta'_\mr$ be the edges of $\eta,\eta'$ with midpoints in $\Gamma_\mr\setminus J_\iota$.   
   Using the above coupling, 
   for any $\tau'\in\tilde\Omega^{\xi}_{\Gamma_\ml\setminus J_\iota}$ we obtain
   \begin{align*}
      \mu_{\Gamma_\ml\setminus J_\iota}^{\xi}(\tau')=\mathbb{P}(\eta'_\ml=\tau')
      &= \sum\nolimits_{\tau\in\tilde\Omega^{\xi\cup\sigma_\mr}_{\Gamma_\ml\setminus J_\iota}} \mathbb{P}(\eta_\ml = \tau, \eta_\ml'=\tau')\\
      &= \mathbb{P}(\eta_\ml = \tau', \eta_\ml'=\tau') + \sum\nolimits_{\tau\in\tilde\Omega^{\xi\cup\sigma_\mr}_{\Gamma_\ml\setminus J_\iota}\colon \tau \neq \tau'} \mathbb{P}(\eta_\ml = \tau, \eta_\ml'=\tau').
   \end{align*}
   The first term on the right-hand side above is at most $\mathbb{P}(\eta_\ml=\tau')=\mu_{\Gamma_\ml\setminus J_\iota}^{\xi\cup\sigma_\mr}(\tau')$.
   The second term is bounded above by 
   $$
      \mathbb{P}(\eta_\ml'=\tau')\mathbb{P}(\eta_\ml'\neq \eta_\ml \mid \eta_\ml'=\tau')
      \leq \mathbb{P}(\eta_\ml'=\tau')\mathbb{P}(I_{\eta,\eta'}^\compl\mid \eta_\ml'=\tau')
      \leq \mathbb{P}(\eta_\ml'=\tau')\exp(-4c C\log n),
   $$
   where the last step follows from Lemma~\ref{lem:toptobottom}.
   Plugging this into the equation above, and rearranging the terms, we obtain
   $$
      \mu_{\Gamma_\ml\setminus J_\iota}^{\xi\cup\sigma_\mr}(\tau')
      \geq \left(1-\exp(-4c C\log n)\right)\mu_{\Gamma_\ml\setminus J_\iota}^{\xi}(\tau'),
   $$
   which holds uniformly over $\tau'$ and $\sigma_\mr$. Similarly, we write 
   \begin{align*}
      \mu_{\Gamma_\ml\setminus J_\iota}^{\xi\cup\sigma_\mr}(\tau)=\mathbb{P}(\eta_\ml=\tau)
      &\leq\mathbb{P}(\eta_\ml'=\tau)+ \mathbb{P}(\eta_\ml=\tau)\mathbb{P}(I_{\eta,\eta'}^\compl\mid \eta_\ml=\tau)\\
      &\leq \mu_{\Gamma_\ml\setminus J_\iota}^{\xi}(\tau)+ \mu_{\Gamma_\ml\setminus J_\iota}^{\xi\cup\sigma_\mr}(\tau)\exp(-4cC\log n),
   \end{align*}
   and the proof of \eqref{cice} is completed by rearranging the terms and setting $c_2$ appropriately.
\end{proof}

\subsection{Recursion via bisection}\label{sec:recursion}
We consider slabs of different scales: we index the scale by $j$, where $j=0$ corresponds to the full slab $\Lkn$ of width $n$, while  at scale $j$, 
we have slabs of width $w$ roughly equal to $n2^{-j}$. 
The finest scale will be 
\begin{equation}
   j_* = \min\big\{j\geq 0 \colon n2^{-j}\leq C^6\log ^6n\big\};
   \label{eq:kappa}
\end{equation}
in particular,  $n2^{-j_*}\geq \tfrac12(C^6\log^6n)$.
Recall how slabs are split and the definition of $\iota$ from the construction of $\Gamma_\ml$ and $\Gamma_\mr$ in the paragraph culminating in~\eqref{eq:gammalr}. 

Consider a given scale $j\in\{0,\dots,j_*\}$, and let $\Gamma=\Gamma_j$ be a slab at scale $j$.
Set $W_0=n$, and define the intervals
$$ 
   W_j
   =\big[n2^{-j}-jC^3\log^3n,n2^{-j}+jC^3\log^3n\big]\,,\quad j=1,\dots,j_*
$$ 
Notice that our slab $\Gamma $ is obtained after $j$ steps of the bisection procedure, so
that  $\Gamma$ necessarily has width $w\in W_j$. 
Let $\si\in\tilde \O$ be an arbitrary triangulation in the good ensemble 
and set $\xi=\si_{\Lkn\setminus \Gamma}\in\tilde\O_{\Lkn\setminus \Gamma}$ as a boundary condition for the region $\Gamma$. 
Consider 
the continuous time Markov chain on $\tilde\O^\xi_\Gamma$ with Dirichlet form
 \begin{equation}\label{dirgxi}
   \mathcal{E}_\Gamma^\xi(f,f)=\frac{1}{2}\sum_{\sigma_\G,\sigma'_\G\in\tilde\O^\xi_\Gamma}\mu_\G^\xi(\sigma_\G)\r_\G^\xi(\si_\G,\si'_\G)(f(\sigma_\G\cup \xi)-f(\sigma'_\G\cup\xi))^2,
\end{equation}
where $f:\tilde\O\mapsto\bbR$ and 
\begin{equation}\label{rates}
\r_\G^\xi(\si_\G,\si'_\G) = \frac{\l^{|\sigma'_\G\cup \xi|}}{\l^{|\sigma_\G\cup \xi|}+\l^{|\si'_\G\cup \xi|}}\ind{\sigma_\G\cup \xi\sim\sigma'_\G\cup \xi}\,.
\end{equation}
Let $\sobolev(\Gamma,\xi)$ denote the log-Sobolev constant defined as the smallest constant $c>0$ such that
\begin{equation}
   \ent_\Gamma^\xi(f^2) \leq c\,\mathcal{E}_\Gamma^\xi(f,f),
   \label{eq:sobolev2}
\end{equation}
holds for all functions $f$, where $ \ent_\Gamma^\xi(f^2)$ denotes the entropy of $f^2$ with respect to $\mu_\G^\xi$.

Finally we define, for each $j$, 
$$
   \gamma_j = \sup\big\{\sobolev(\Gamma,\xi) \colon \Gamma\subset \Lkn \text{ is a slab of width $w\in W_j$, and } \xi\in\tilde\Omega_{\Lkn\setminus \Gamma}\big\}.
$$

The following lemma summarizes the result of this recursion. 
\begin{lemma}\label{lem:recursion}
   There exists a positive constant $c_2$ such that, for any integer $j\in\{0,\dots,j_*-1\}$,
   $$
      \gamma_j
      \leq \left(1+e^{-c_2  \log n}\right)\left(1+\tfrac{4}{C^2\log^2n}\right)\gamma_{j+1}.
   $$
\end{lemma}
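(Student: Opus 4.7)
The plan is to implement one bisection step exactly as in the high-level overview of Section~\ref{sec:poverview}. Given a slab $\Gamma$ at scale $j$ with boundary condition $\xi \in \tilde\Omega_{\Lkn \setminus \Gamma}$, apply the random partitioning described in \eqref{eq:gammalr}: split $\Gamma$ into $\Gamma_1$, the middle slab $\Gamma_2$ of horizontal width $C^3 \log^3 n$, and $\Gamma_3$; partition $\Gamma_2$ into the $s = C^2 \log^2 n / 4$ sub-slabs $J_1,\ldots,J_s$; pick $\iota \in \{1,\ldots,s\}$ uniformly at random; and define $\Gamma_\ml, \Gamma_\mr$ accordingly. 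A short computation using the definition of $W_j$ shows that for every $\iota$, both $\Gamma_\ml$ and $\Gamma_\mr$ are slabs of horizontal width in $W_{j+1}$, so their log-Sobolev constants are controlled by $\gamma_{j+1}$.

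For each fixed $\iota$, I would apply the decomposition estimate of \cite[Proposition~2.1]{Cesi} to the pair $(\Gamma_\ml,\Gamma_\mr)$. The decay-of-correlations hypothesis between the $\sigma$-algebras generated by $\Gamma_\ml \setminus J_\iota$ and $\Gamma_\mr \setminus J_\iota$ is exactly provided by Lemma~\ref{lem:decay} with $J_\iota$ as the overlap. This yields
$$
\ent_\Gamma^\xi(f^2) \leq (1 + e^{-c_3 \log n})\, \mu_\Gamma^\xi\!\left[\ent_{\Gamma_\ml}^{\xi \cup \sigma_{\mr}}(f^2) + \ent_{\Gamma_\mr}^{\xi \cup \sigma_{\ml}}(f^2)\right],
$$
for some $c_3 = c_3(\l,k) > 0$, where $\sigma_{\mr}$ and $\sigma_{\ml}$ denote the restrictions of $\sigma$ to $\Gamma_\mr \setminus \Gamma_\ml$ and $\Gamma_\ml \setminus \Gamma_\mr$ respectively. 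Using the log-Sobolev inequality at scale $j+1$ on each conditional entropy gives $\ent_{\Gamma_\ml}^{\xi \cup \sigma_{\mr}}(f^2) \leq \gamma_{j+1}\, \cE_{\Gamma_\ml}^{\xi \cup \sigma_{\mr}}(f,f)$ and analogously for $\Gamma_\mr$.

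I then average over $\iota$ uniform on $\{1,\ldots,s\}$. Since the left-hand side does not depend on $\iota$, only the sum of Dirichlet forms must be controlled. Decomposing each $\cE_{\Gamma_\ml}^{\xi \cup \sigma_{\mr}}(f,f) + \cE_{\Gamma_\mr}^{\xi \cup \sigma_{\ml}}(f,f)$ into contributions from individual midpoints, one sees that for $x \in \Gamma_1$ (resp.\ $x \in \Gamma_3$) the flip term appears exactly once, matching the corresponding term in $\cE_\Gamma^\xi$; while for $x$ in the $i$-th middle sub-slab $J_i$, it appears twice when $i = \iota$ and once otherwise. The expected multiplicity for such midpoints is $1 + 1/s$, so by Fubini
$$
\bbE_\iota\, \mu_\Gamma^\xi\!\left[\cE_{\Gamma_\ml}^{\xi \cup \sigma_{\mr}}(f,f) + \cE_{\Gamma_\mr}^{\xi \cup \sigma_{\ml}}(f,f)\right] \leq (1 + 1/s)\, \cE_\Gamma^\xi(f,f).
$$
Combining these ingredients yields $\ent_\Gamma^\xi(f^2) \leq (1 + e^{-c_3 \log n})(1 + 1/s)\, \gamma_{j+1}\, \cE_\Gamma^\xi(f,f)$, and taking the supremum over $(\Gamma,\xi)$ at scale $j$ gives the claim with $c_2 = c_3$, since $1/s = 4/(C^2 \log^2 n)$.

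The main technical obstacle I expect is verifying that the hypothesis of \cite[Proposition~2.1]{Cesi} applies in the present setting: in the original spin-system application the mixing input is typically a pointwise ratio estimate for conditional partition functions, and the $L^1$-type bound provided by Lemma~\ref{lem:decay} must be repackaged into this shape while keeping track of constants carefully enough that the multiplicative prefactor is $(1 + e^{-c_2 \log n})$ rather than something larger. One should also check that Lemma~\ref{lem:decay} continues to apply with the general boundary condition $\xi \in \tilde\Omega_{\Lkn \setminus \Gamma}$; since $\xi$ lies in the good ensemble and the decoupling mechanism (a common top-to-bottom crossing of unit verticals inside $J_\iota$) is local to the overlap, only straightforward modifications of the coupling argument should be required.
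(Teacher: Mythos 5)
Your proposal follows essentially the same route as the paper's proof: a single randomized bisection step, the Cesi decomposition of entropy driven by Lemma~\ref{lem:decay}, application of the scale-$(j+1)$ log-Sobolev inequality to each conditional entropy, and the per-midpoint accounting showing that averaging over $\iota$ makes the Dirichlet forms recombine with multiplicity $1+1/s$ (the paper records this as the inequality that the sum over $\iota$ of the two conditional Dirichlet forms is at most $(1+s)\,\mathcal{E}_\Gamma^\xi(f,f)$, via the identity $\mu_{\Gamma_\mr\setminus J_\iota}^\xi(\sigma_\mr)\,\mu_{\Gamma_\ml}^{\xi\cup\sigma_\mr}(\eta)\,\r_{x,\Gamma_\ml}^{\xi\cup\sigma_\mr}(\eta)=\mu_\Gamma^\xi(\eta\cup\sigma_\mr)\,\r_{x,\Gamma}^\xi(\eta\cup\sigma_\mr)$). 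Your closing caveats about verifying the hypotheses of Cesi's proposition and the applicability of Lemma~\ref{lem:decay} under a general good-ensemble boundary condition $\xi$ are legitimate but are handled in the paper exactly as you suggest, by direct citation and by the locality of the crossing argument.
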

\begin{proof}
   Let $\Gamma$ be a fixed slab of width $w\in W_{j}$, and let $\xi$ be a given boundary condition.
   Let $s$, $\iota$, $\Gamma_\ml$ and $\Gamma_\mr$ be as described in the paragraph culminating in~\eqref{eq:gammalr}. 
   From Lemma~\ref{lem:decay} and \cite[Proposition 2.1]{Cesi}, for any function $f\colon \tilde\Omega\mapsto \mathbb{R}$ we have that $\ent_\Gamma^\xi(f^2)$ is bounded above by 
   \begin{equation}
\frac1{s}       \sum_{\iota=1}^s \left(1+e^{-c_2  \log n}\right)
         \left(\sum_{\sigma_\mr \in \tilde\Omega_{\Gamma_\mr\setminus J_\iota}^\xi}\mu_{\Gamma_\mr\setminus J_\iota}^\xi(\sigma_\mr)\ent_{\Gamma_\ml}^{\xi\cup\sigma_\mr}(f^2)
              +\sum_{\sigma_\ml \in \tilde\Omega_{\Gamma_\ml\setminus J_\iota}^\xi}\mu_{\Gamma_\ml\setminus J_\iota}^\xi(\sigma_\ml)\ent_{\Gamma_\mr}^{\xi\cup\sigma_\ml}(f^2)\right).
      \label{eq:recent1}
   \end{equation}
   Note that $\ent_{\Gamma_\ml}^{\xi\cup\sigma_\mr}(f^2)$ and $\ent_{\Gamma_\mr}^{\xi\cup\sigma_\ml}(f^2)$ are entropy functions for slabs on scale $j+1$ 
   given boundary conditions $\xi\cup\sigma_\mr$ and $\xi\cup\sigma_\ml$, respectively. 
   Therefore, by \eqref{eq:sobolev2} we have
    \begin{equation}\label{sumod1}
          \ent_{\Gamma_\ml}^{\xi\cup\sigma_\mr}(f^2)
      \leq \sobolev(\Gamma_\ml,\xi\cup\sigma_\mr)\mathcal{E}_{\Gamma_\ml}^{\xi\cup\sigma_\mr}(f,f)
      \leq \gamma_{j+1}\mathcal{E}_{\Gamma_\ml}^{\xi\cup\sigma_\mr}(f,f),
   \end{equation} and similarly for the second term in~\eqref{eq:recent1}. 
Now we claim that
    \begin{equation}\label{sumod}
   \sum_{\iota=1}^s 
         \left(\sum_{\sigma_\mr \in \tilde\Omega_{\Gamma_\mr\setminus J_\iota}^\xi}\mu_{\Gamma_\mr\setminus J_\iota}^\xi(\sigma_\mr)\mathcal{E}_{\Gamma_\ml}^{\xi\cup\sigma_\mr}(f,f)
              +\sum_{\sigma_\ml \in \tilde\Omega_{\Gamma_\ml\setminus J_\iota}^\xi}\mu_{\Gamma_\ml\setminus J_\iota}^\xi(\sigma_\ml)\mathcal{E}_{\Gamma_\mr}^{\xi\cup\sigma_\ml}(f,f)\right)\leq (1+s)\mathcal{E}_{\Gamma}^{\xi}(f,f).
\end{equation}
To prove \eqref{sumod} we proceed as follows. 
Since a given edge $\si_x$ in a triangulation has at most one value $\si'_x\neq \si_x$ it can flip to, we may write the flip rates \eqref{rates} as 
$$
\r_\G^\xi(\si_\G,\si'_\G) = \sum_{x\in\G}\frac{\l^{|\sigma'_x|}}{\l^{|\sigma_x|}+\l^{|\si'_x|}}
\ind{\sigma_\G\cup \xi\sim\sigma'_\G\cup \xi;\;\si_x\neq\si'_x}=:\sum_{x\in\G}\r_{x,\G}^\xi(\si_\G)\,.
$$
Therefore,
\begin{equation}\label{dirichxi}
\mathcal{E}_{\Gamma}^{\xi}(f,f)=\frac12 \sum_x\sum_{\si_\G\in\tilde\O_\G^\xi}\mu_\G^\xi(\si_\G)\r_{x,\G}^\xi(\si_\G)(\nabla_x f(\si_\G\cup\xi))^2,
\end{equation}
where we use $\nabla_x f$ to denote the difference in values of $f$ before and after the flip at $x$.
It follows that
\begin{align*}
&\sum_{\sigma_\mr \in \tilde\Omega_{\Gamma_\mr\setminus J_\iota}^\xi}\mu_{\Gamma_\mr\setminus J_\iota}^\xi(\sigma_\mr)\mathcal{E}_{\Gamma_\ml}^{\xi\cup\sigma_\mr}(f,f) \\
&\qquad = \frac{1}{2}\sum_{x\in\G_\ml}
\sum_{\sigma_\mr \in \tilde\Omega_{\Gamma_\mr\setminus J_\iota}^\xi}\mu_{\Gamma_\mr\setminus J_\iota}^\xi(\sigma_\mr)\sum_{\eta_{\G_\ml} \in \tilde\Omega_{\Gamma_\ml}^{\xi\cup\si_\mr}}\mu_{\Gamma_\ml}^{\xi\cup\si_\mr}(\eta_{\G_\ml})\r_{x,\G_\ml}^{\xi\cup\si_\mr}(\eta_{\G_\ml})(\nabla_x f(\si_{\eta_{\G_\ml}}\cup\xi\cup\si_\mr))^2,
\end{align*}
where, as before, we use the shortcut notation $\si_\mr=\si_{\G_\mr\setminus J_\iota}$.
Using
$$
\mu_{\Gamma_\mr\setminus J_\iota}^\xi(\sigma_\mr)\mu_{\Gamma_\ml}^{\xi\cup\si_\mr}(\eta_{\G_\ml})\r_{x,\G_\ml}^{\xi\cup\si_\mr}(\eta_{\G_\ml}) = \mu_\G^\xi(\eta_{\G_\ml}\cup\si_\mr)\r_{x,\G}^{\xi}(\eta_{\G_\ml}\cup\si_\mr)
$$
and rearranging the sum, we obtain
\begin{align*}
&\sum_{\sigma_\mr \in \tilde\Omega_{\Gamma_\mr\setminus J_\iota}^\xi}\mu_{\Gamma_\mr\setminus J_\iota}^\xi(\sigma_\mr)\mathcal{E}_{\Gamma_\ml}^{\xi\cup\sigma_\mr}(f,f) 
= \frac{1}{2}\sum_{x\in\G_\ml}
\sum_{\sigma_\G \in \tilde\Omega_{\Gamma}^\xi}\mu_{\Gamma}^{\xi}(\si_\G)\r_{x,\G}^{\xi}(\si_\G)(\nabla_x f(\si_{\G}\cup\xi))^2.
\end{align*}
A similar expression holds for the second term on the left-hand side of \eqref{sumod}, and the desired estimate follows from the expression \eqref{dirichxi}.
 
   Plugging \eqref{sumod} and \eqref{sumod1} into the bound in~\eqref{eq:recent1} we have 
       \begin{align*}
   \ent_\Gamma^\xi(f^2)   \leq \left(1+e^{-c_2  \log n}\right)\gamma_{j+1}\left(1+\tfrac1s\right) \mathcal{E}_{\Gamma}^{\xi}(f,f).
   \end{align*}
   This establishes that $\sobolev(\Gamma,\xi)\leq \left(1+e^{-c_2  \log n}\right)\gamma_{j+1}\left(1+\tfrac{1}{s}\right)$. 
   Since this bound does not 
   depend on $\xi$ and the choice of slab $\Gamma$ at scale $j$, the proof is completed by using the value of $s$ from~\eqref{eq:s}.
\end{proof}

We conclude the proof with the base of the induction.
\begin{lemma}\label{lem:base}
   There exists a constant $c=c(\lambda,k)$ such that 
   $$
      \gamma_{j_*} \leq \log^{c} n.
   $$
\end{lemma}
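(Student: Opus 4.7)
The plan is to establish $\sobolev(\Gamma,\xi) \leq \polylog(n)$ directly at the base scale, uniformly over slabs $\Gamma$ of width $w \in W_{j_*}$ (so $w = \Theta(\log^6 n)$ by \eqref{eq:kappa}) and boundary conditions $\xi \in \tilde\O_{\Lkn\setminus\Gamma}$ (so every edge of $\xi$ has $\ell_1$-length at most $C\log n$). I would combine the second inequality in \eqref{sob_mix},
\[
\sobolev(\Gamma,\xi) \leq T_\rel(\Gamma,\xi)\,\frac{\log((\mu^\xi_\Gamma)_*^{-1})}{1-2(\mu^\xi_\Gamma)_*},
\]
with polylogarithmic bounds on each of the two factors.

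For the entropic factor, Anclin's bound gives $|\tilde\O^\xi_\Gamma| \leq 2^{O(wk)} = 2^{O(\log^6 n)}$, and any $\sigma_\Gamma \in \tilde\O^\xi_\Gamma$ has total $\ell_1$-length at most $O(wk\log n) = O(\log^7 n)$, since there are $O(wk)$ edges each of length at most $C\log n$. Combining these with the definition of $\mu^\xi_\Gamma$ yields $\log((\mu^\xi_\Gamma)_*^{-1}) = O(\log^7 n)$.

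For the relaxation time of the restricted chain on $\tilde\O^\xi_\Gamma$, I would re-run the canonical paths argument from the proof of Theorem~\ref{thm:polybound} with the slab $\Gamma$ (of size $w\times k$) playing the role of the full rectangle. The hypothesis $|\xi_x| \leq w/4$ of Theorem~\ref{thm:polybound} is satisfied for $n$ large since $C\log n \ll w = \Theta(\log^6 n)$. The monotone canonical paths through the ground state (decreasing flips followed by increasing flips, processed slab-by-slab in vertical strips of width $O(\log w)$) stay inside $\tilde\O^\xi_\Gamma$: by the monotonicity inequality \eqref{eq:monotonicity}, each intermediate edge has length bounded by the maximum of the two endpoint edge lengths, hence at most $C\log n$. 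The congestion analysis from the proof of Theorem~\ref{thm:polybound} (with $n$ replaced by $w$) then delivers $T_\rel(\Gamma,\xi) \leq w^{c'} = \polylog(n)$ for the restricted chain.

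Multiplying the two polylog factors and taking the supremum over all $\Gamma$ and $\xi$ at scale $j_*$ gives $\gamma_{j_*} \leq \log^c n$ for some $c=c(\lambda,k)$, as required. The main technical point is verifying that the canonical paths argument of Theorem~\ref{thm:polybound} transfers to the \emph{restricted} chain on $\tilde\O^\xi_\Gamma$. This reduces to the observation that the good ensemble is closed under the monotone paths used there: by \eqref{eq:monotonicity} both the endpoints and every intermediate triangulation along such a path have all edges of length at most $C\log n$, so the canonical paths and their congestion estimates remain valid verbatim inside $\tilde\O^\xi_\Gamma$.
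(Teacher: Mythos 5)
Your proposal is correct and follows essentially the same route as the paper: bound $\sobolev(\Gamma,\xi)$ by the product of the relaxation time (obtained by re-running the canonical-paths argument of Theorem~\ref{thm:polybound} on the $\Theta(\log^6 n)\times k$ slab, restricted to the good ensemble) and $\log((\mu_\Gamma^\xi)_*^{-1})$, via the second inequality in \eqref{sob_mix}. Your explicit verification that the monotone canonical paths stay inside $\tilde\O_\Gamma^\xi$ is a point the paper passes over with ``can be repeated with no modifications,'' and your $O(\log^7 n)$ accounting of $\log((\mu_\Gamma^\xi)_*^{-1})$ is if anything more careful than the paper's stated $n^{-c_2}$ lower bound on $(\mu_\Gamma^\xi)_*$; both yield the required polylogarithmic bound.
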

\begin{proof}
   Let $\Gamma$ be a slab at scale $j_*$, so that  the width of $\Gamma$ is of order $\log^6n$.
   Let $\xi\in\tilde\Omega_{\Lkn\setminus\Gamma}$ be a boundary condition.
   We note that the argument of Theorem~\ref{thm:polybound} can be repeated with no modifications for the chain restricted to the good set $\tilde \O$. Therefore, there exists a constant $c_1=c_1(\lambda,k)$ independent of $\Gamma$ and $\xi$ such that 
   the relaxation time of the discrete time chain on $\G$ with boundary condition $\xi$ is at most $\log^{c_1}n$.  Passing to continuous time, we have that $\tilde T_\rel(\Gamma,\xi)\leq \log^{c_1}n$.  
   Since triangulations in $\tilde\Omega_\Gamma^\xi$ have edges of length at most $C\log n$, 
   there exists a constant $c_2$ 
   such that 
   $$
      \min_{\sigma_\G\in \tilde\Omega_\Gamma^\xi}\mu_\Gamma^\xi(\sigma_\G) \geq n^{-c_2},
   $$
   uniformly over all slabs $\Gamma$ at scale $j_*$ and boundary conditions $\xi$.  
   Therefore, using the relation between the relaxation time and the log-Sobolev constant
   from~\eqref{eq:sobolev} we have that 
   $$
      \sobolev(\Gamma,\xi) \leq 
      \tilde T_\rel(\Gamma,\xi)\, \left(\frac{\log(n^{c_2})}{1/2}\right).
   $$
   Since the bound above is uniform~$\Gamma$ and~$\xi$, this proves the desired bound on $\gamma_{j_*}$.
\end{proof}

\subsection{Completing the proof}

\begin{proof}[Proof of Theorem~\ref{mainth}]
   We start by bounding the mixing time of the discrete time Markov chain on $\tilde\Omega$.
   Lemma~\ref{lem:recursion} implies that the log-Sobolev constant of the continuous time Markov chain on $\Lkn$ with no boundary condition is at most 
   $$
      \sobolev(\Lkn) \leq \gamma_0
      \leq \left(1+e^{-c_2 \log n}\right)^{j_*-1}\left(1+\tfrac{4}{C^2\log^2n}\right)^{j_*-1}\gamma_{j_*}
      \leq 2\gamma_{j_*},
   $$
   where the last step follows since $j_*\leq \log_2 n$.
   Also, we have that 
   $$
      \min_{\sigma\in \tilde\Omega}\mu(\sigma)
      \geq \frac{\lambda^{|\Lkn|C\log n}}{(2\lambda)^{|\Lkn|}},
   $$
   where $(2\lambda)^{|\Lkn|}$ comes from Anclin's bound of $2^{|\Lkn|}$ for the number of lattice triangulations \cite{Anclin:2003jb}, and the fact that the 
   total edge length of any triangulation is at least $|\Lkn|$.
   Therefore, using the relation between the mixing time and log-Sobolev constant in~\eqref{eq:sobolev}, we deduce that the mixing time $\tilde T_\mix$ of the continuous time Markov chain on $\tilde\O$ is bounded above by  $c\gamma_{j_*}\log n$. Thus, 
   the mixing time of the discrete chain in $\tilde\Omega$ is at most 
   $|\Lkn| c\gamma_{j_*}\log n$,
   for some constant $c$. Using Lemma~\ref{lem:base} and the fact that $|\Lkn|$ is of order $nk$, 
   we obtain that the mixing time of the Markov chain restricted to 
   $\tilde \Omega$ is at most $c n \log^{c}n$, for some new positive constant $c$ (which depends
   on~$k$ and~$\lambda$).
   
   Now we compare the restricted chain on $\tilde\O$ to the original unrestricted chain on $\O=\Okn$.  
   Let $T_1=c n \log^{c}n$ and fix the constant $c>0$  so that the total variation distance between the restricted chain at time $T_1$ and the restricted stationary distribution $\tilde \mu$ is 
   at most $1/8$.
   We obtain the mixing time of the  unrestricted chain 
   via the following coupling.
   Let $T_0=c_1n^2$, where $c_1$ is the constant in Lemma~\ref{lem:burnin}. 
   Let the unrestricted Markov chain run for $T_0+T_1$ steps. 
   With probability at least $1-n^{-2}$, the unrestricted chain never leaves the set $\tilde\Omega$ during the time interval $[T_0,T_0+T_1]$; therefore, we 
   can couple its steps with those of the restricted chain. This gives that the total variation distance between the unrestricted chain at time $T_0+T_1$ 
   and the stationary distribution is at most $n^{-2} + 1/8 + \mu(\Omega\setminus \tilde\Omega)$.
   Since $\Omega\setminus \tilde\Omega$ only contains triangulations for which the largest edge is larger than $C\log n$, Lemma~\ref{lem:tailedge} ensures that
$\mu(\Omega\setminus \tilde\Omega) \leq n^{-2}$ for large enough~$C$, and therefore 
   the total variation distance between the unrestricted chain at time $T_0+T_1$ and its stationary distribution is at most $1/4$.  This completes the proof of Theorem~\ref{mainth}.
\end{proof}

\bibliographystyle{plain}
\bibliography{triang_slabs}

\end{document}